\documentclass[a4paper]{amsart}

\usepackage[utf8]{inputenc}
\usepackage[T1]{fontenc}
\usepackage[english]{babel}
\usepackage{babelbib}
\usepackage{enumerate}
\usepackage{url}
\usepackage{amssymb}
\usepackage{dirtytalk}
\usepackage{todonotes} 
\usepackage{microtype}
\usepackage{scalerel}
\usepackage{fullpage}
\usepackage{mathtools}

\DeclareFontFamily{U}{skulls}{}
\DeclareFontShape{U}{skulls}{m}{n}{ <-> skull }{}

\usepackage[hypertexnames=false,backref=page,
 	pdfpagemode=UseNone,
 	breaklinks=true,
 	extension=pdf,
 	colorlinks=true,
 	linkcolor=blue,
 	citecolor=red,
 	urlcolor=blue,
 ]{hyperref} 
\usepackage{cleveref} 
\usepackage{color}

\usepackage{tikz}
\usepackage{tikz-cd}

\usetikzlibrary{matrix}
\usetikzlibrary{arrows}

\usepackage{graphicx}
\usepackage{mathrsfs}

\renewcommand{\mathcal}{\mathscr}

\newcommand{\bbP}{\mathbb{P}}
\newcommand{\bbQ}{\mathbb{Q}}

\newcommand{\sym}{{\bbS}}

\newcommand{\gon}{\textup{gon}}

\renewcommand{\epsilon}{\varepsilon}

\renewcommand{\hom}{\Hom}

\newcommand{\homs}{\textup{\ul{Hom}}}

\renewcommand{\sp}[1]{\cite[\href{https://stacks.math.columbia.edu/tag/#1}{Tag~#1}]{Sp18}}

\makeatletter
\newcommand{\vast}{\bBigg@{4}}
\newcommand{\Vast}{\bBigg@{5}}

\newcommand{\Vastl}{\mathopen\Vast}
\newcommand{\Vastm}{\mathrel\Vast}
\newcommand{\Vastr}{\mathclose\Vast}
\makeatother

\DeclareMathOperator{\spec}{Spec}

\DeclareMathOperator{\Hom}{Hom}

\DeclareMathOperator{\im}{Im}

\renewcommand{\le}{\leqslant}
\renewcommand{\ge}{\geqslant}

\newcommand{\ul}{\underline}
\newcommand{\ol}{\overline}

\renewcommand{\sym}{\textup{Sym}}

\theoremstyle{plain}
\newtheorem{theoremintro}{Theorem}

\newtheorem{theorem}{Theorem}[section]
\newtheorem{lemma}[theorem]{Lemma}

\theoremstyle{definition}

\newtheorem{definition}[theorem]{Definition}
\newtheorem{example}[theorem]{Example}

\newtheorem{remark}[theorem]{Remark}

\numberwithin{equation}{section}

\begin{document}

\title{Points of low degree on curves over function fields}

\begin{abstract}
    We show that the geometric classification of smooth projective curves admitting infinitely many points of degree $d\le 5$ extends from number fields to function fields of characteristic 0. Over number fields, this classification was established by Faltings for $d=1$, Harris--Silverman for $d=2$, Abramovich--Harris for $d=3,4$ and Kadets--Vogt for $d=4,5$. Our approach uses a specialization argument to reduce the problem over function fields to the number field case.
\end{abstract}

\author[S. van Schaick]{Si\`ena van Schaick}
\address{IMAPP Radboud University Nijmegen\\
    PO Box 9010 \\
    6500GL Nijmegen\\
    The Netherlands.}
\email{siena.vanschaick@ru.nl}

\makeatletter
\@namedef{subjclassname@2020}{
    \textup{2020} Mathematics Subject Classification}
\makeatother

\subjclass[2020]{14G99 (11G30, 14H45, 14G05)}
\keywords{rational points, higher degree points, function fields, (algebraic) curves}

\maketitle

\setcounter{tocdepth}{1}

\section{Introduction}

In this paper we adapt results of \cite{HS91}, \cite{AH91}, and \cite{KV25} on the minimal potential density degree of curves over number fields, to curves over function fields of characteristic 0. In particular, we generalize their results on curves over number fields to curves over fields which are finitely generated over $\bbQ$.

We work with the following definitions. A \textit{function field} over a field $k$ is a finitely generated field extension of $k$ which has transcendence degree at least 1. With a \textit{curve} $X$ over a field $k$ we mean a geometrically integral, separated scheme of finite type over $k$ of dimension 1. We say that the points of degree $d$ on $X$ are \textit{potentially dense} if there exists a finite field extension $l/k$ such that the set of points of degree $d$ on $X_l$ is Zariski dense in $X_l$. We define the \textit{potential density degree set} $\wp(X/k)$ to be the set of positive integers $d$ such that the set of points of degree $d$ on $X$ is potentially dense in $X$, and we define the \textit{minimal potential density degree} $\min(\wp(X/k))$ as the minimum of this set.

We first note two distinct cases in which a projective curve $X$ over a number field or function field of characteristic 0 has a (potentially) dense set of points of low degree. Firstly, if $X$ admits a finite morphism of degree $d$ to $\mathbb{P}^1$ or an elliptic curve $E$ of positive rank, the rational points of $\mathbb{P}^1$ or $E$ can be pulled back to a dense set of points of degree at most $d$ on $X$. Secondly, if $X$ is (the normalization of) a DF curve of type $d$ (see \Cref{def:DF_curve}), then $X$ has a potentially dense set of points of degree $d$. The name \say{DF curve} is inspired by the work of Debarre and Fahlaoui, who in \cite{DF93} constructed the first examples of DF curves to show the existence of curves with a dense set of points of degree $d$ \textit{not} coming from a morphism to $\mathbb{P}^1$ or an elliptic curve.

The central problem in this paper is to give a geometric characterization of smooth projective curves over function fields of characteristic 0 with minimal potential density degree equal to a specific value of $d$. Our starting point is the following classification over number fields. For more on the structure of the potential density degree set of curves over number fields, see \cite{VV24}.

\begin{theorem}[Classification over number fields \cite{Fal83}, \cite{HS91}, \cite{AH91}, \cite{KV25}]\label{thm:geom_char_deg_d_nf}
    Let $K$ be a number field, and let $X/K$ be a smooth projective curve.
    \begin{enumerate}
        \item If $\min(\wp(X/K)) = d \le 3$, then $X_{\overline{K}}$ is a cover of degree $d$ of $\bbP^1$ or of an elliptic curve over $\overline{K}$.
        \item If $\min(\wp(X/K)) = d = 4,5$, then either $X_{\overline{K}}$ is a cover of degree $d$ of $\bbP^1$ or of an elliptic curve over $\overline{K}$, or $X_{\overline{K}}$ is (the normalization of) a DF curve of type $d$.
    \end{enumerate}
\end{theorem}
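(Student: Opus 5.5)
This classification is assembled from the literature, so the plan is to derive each part from the cited results, with a uniform reduction to the symmetric power and the Jacobian. Let $d=\min(\wp(X/K))$ and choose a finite extension $L/K$ over which the degree $d$ points on $X_L$ are Zariski dense. Since $d$ is minimal, points of degree $<d$ are not dense after any finite extension. Consider the map $X^{(d)} \to \mathrm{Pic}^d_{X}$ and its image $W_d$. By Faltings' theorem on subvarieties of abelian varieties \cite{Fal83}, the $L$-points of $W_d$ lie in finitely many translates $x_i + B_i$ of abelian subvarieties contained in $W_d$. Density of degree $d$ points on $X$ forces infinitely many $L$-rational effective degree $d$ divisors. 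There are then two cases:
\begin{itemize}
\item[(a)] Infinitely many of these divisors lie in a single fibre of $X^{(d)}\to W_d$. This gives a $g^1_d$, and hence a degree $d$ map to $\bbP^1$ after removing base points. Minimality of $d$ rules out base points, since removing them would produce dense points of smaller degree.
\item[(b)] The divisors map to a translate $x+B$ with $\dim B\ge 1$ and $B(L)$ infinite.
\end{itemize}

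For $d=1$ this is just Faltings' theorem: $W_1\cong X$ contains a positive-dimensional abelian translate only if $g\le 1$, and then $X$ is $\bbP^1$ or an elliptic curve. For $d=2,3$ I would follow Harris--Silverman \cite{HS91} and Abramovich--Harris \cite{AH91}. In case (b), $W_d$ contains an elliptic curve or abelian variety translate $x+B$ with $d\le 3$. Using the Abramovich--Harris analysis of abelian varieties in $W_d$ for small $d$, one shows that $x+B\subset W_d$ with $d\le 3$ forces $X$ to be a degree $d$ cover of an elliptic curve $E\cong B$ (or of a curve covered by $B$). The pullback structure then comes from taking the fibres of the map $B\to W_d$.

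For $d=4,5$ I would invoke Kadets--Vogt \cite{KV25}. They classify abelian translates in $W_d$ for these degrees, and show that the only new possibility (beyond covers of $\bbP^1$ or elliptic curves) is that the divisors come from a DF curve of type $d$. In that situation, $X$ is the normalization of a curve in a product $\bbP^1\times E$ (or a similar surface) whose degree $d$ points arise via pullback along the other projection.

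The main obstacle is the geometric step in case (b): deciding which abelian translates can sit inside $W_d$ without coming from a degree $d$ map. This is exactly where the Abramovich--Harris bound $\dim B\le d/2$ (with equality analysis) enters, and where Kadets--Vogt need a delicate study of the degree $4,5$ cases. A further point to handle is that these statements are over $\ol K$. Any cover we find is first defined over a finite extension $L$, and then base changes to the covering statement for $X_{\ol K}$. Since the theorem is attributed to these papers, I would simply cite these steps rather than re-derive them.
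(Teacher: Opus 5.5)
Your plan agrees with the paper, which gives no argument of its own and simply attributes the statement to \cite{Fal83}, \cite{HS91}, \cite{AH91} and \cite{KV25}. Your outline is how those sources proceed: Faltings puts the rational points of $W_d$ on finitely many abelian translates, and then abelian varieties in $W_d$ are analysed geometrically. So citing them is all that is required. Several statements in your sketch are inaccurate, though, and one of them concerns the conclusion itself.

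\begin{itemize}
\item \textbf{Citation.} \cite{Fal83} is the proof of Mordell's conjecture and only handles $d=1$. The statement that the $L$-points of $W_d$ lie on finitely many translates $x_i+B_i\subseteq W_d$ is Faltings' theorem on subvarieties of abelian varieties \cite{Fal94}.

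\item \textbf{Description of DF curves.} Your description of the exceptional case misidentifies DF curves in a way that erases what makes them exceptional.
\begin{itemize}
\item A DF curve of type $d$ is a curve of class $(d+m)H-mF$ in $\sym^2(E)$. This surface is the ruled surface over $E$ given by the addition map, not a product $\bbP^1\times E$.
\item Its degree $d$ divisors are not pulled back along a projection. For $e\in E$, the curve $H_e=\{e+x : x\in E\}$ is algebraically equivalent to $H$, and $X\cdot H=(d+m)-m=d$. So $e\mapsto X\cap H_e$ is a family of degree $d$ divisors parametrized by $E$, that is, a map $E\to\sym^d(X)$, rather than pullbacks of points along a morphism $X\to E$.
\item If the degree $d$ points did arise by pulling back along a morphism $X\to E$, then $X_{\ol K}$ would be a degree $d$ cover of an elliptic curve, and you would be in the first alternative.
\item Debarre--Fahlaoui \cite{DF93} show that suitable DF curves admit no map of degree $d$ to a curve of genus at most $1$. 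That is exactly why they must be listed separately in the statement.
\end{itemize}

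\item \textbf{Degree in case (b).} The cited geometric results give a map of degree at most $d$ to a curve of genus at most $1$. When the target is elliptic, it is isogenous to $B$, not necessarily isomorphic to it. To get degree exactly $d$, you need minimality of $d$ together with Hilbert irreducibility on $\bbP^1$ and potential density of rational points on elliptic curves. This is the same kind of argument you already used to exclude base points in case (a).
\end{itemize}
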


The theorem states that for $d\le 5$ the aforementioned possible reasons for having an infinite set of points of degree~$d$ are the \textit{only} way for a smooth projective curve over a number field to have $d=\min(\wp(X/K))$. It is a combination of results by Faltings ($d=1$), Harris and Silverman ($d=2$), Abramovich and Harris ($d=3$, and $d=4$ if the genus of the curve is not 7) and Kadets and Vogt ($d=4,5$).

In the present paper, we adapt this classification to smooth projective curves over function fields of characteristic 0. When $d=1$ and $K$ is a function field over an algebraically closed field $k$, Manin sketched a proof of such an adaption when $k=\mathbb{C}$ \cite{Man63} and Grauert later provided a complete proof \cite{Gra65}.

In the setting of function fields, we have to take into account \textit{constant} varieties.

\begin{definition}
    Let $K/k$ be a field extension and $X$ a variety over $K$. Then $X$ is called \textit{constant} or \textit{trivial} (over $k$) if it is defined over $k$, i.e., if there exists a variety $X_0$ over $k$ such that $X \cong (X_0)_{K}$.
\end{definition}

If a variety $X\cong(X_0)_K$ is constant, the $k$-points of $X_0$ induce $K$-points on $X$. In the following theorem, the purpose of the assumption in $(ii)$ that $X_{\overline{K}}$ is not dominated by a cover of degree at most $d$ of a constant curve is to exclude the situation where an infinitude of points of degree $d$ on $X$ come from the rational points of some constant variety, see also \Cref{sec:isotrivial_varieties}.

\begin{theoremintro}[\Cref{thm:main_result}]\label{thm:main_result_intro}
    Let $K$ be a function field over a field $k$ of characteristic 0, let $X/K$ be a smooth projective curve and set $d = \min(\wp(X/K))$. Assume that one of the following holds.
    \begin{enumerate}
        \item[(i)] $k = \mathbb{Q}$; or,
        \item[(ii)] $k$ is algebraically closed and $X_{\overline{K}}$ is not dominated by a cover of degree at most $d$ of a constant curve.
    \end{enumerate}
    Then the following statements hold.
    \begin{enumerate}
        \item If $d \le 3$, then $X_{\overline{K}}$ is a cover of degree $d$ of $\bbP^1$ or of an elliptic curve over $\overline{K}$.
        \item If $d = 4,5$, then either $X_{\overline{K}}$ is a cover of degree $d$ of $\bbP^1$ or of an elliptic curve over $\overline{K}$, or $X_{\overline{K}}$ is (the normalization of) a DF curve of type $d$.
    \end{enumerate}
\end{theoremintro}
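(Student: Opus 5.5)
The plan is to reduce both cases to \Cref{thm:geom_char_deg_d_nf} by specialization.

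\emph{Case (i).} Here $K$ is finitely generated over $\mathbb{Q}$, so $K$ is the function field of an integral variety $B$ over a number field $F$. First spread out: choose a dense open $U\subseteq B$ and a smooth projective morphism $\mathcal{X}\to U$ with generic fibre $X$. By Hilbert irreducibility, $U$ contains many closed points $b$ with residue field $\kappa(b)$ a number field. The aim is to choose $b$ so that the following two properties hold.
\begin{enumerate}
\item[(a)] $\min\wp(\mathcal{X}_b/\kappa(b))\le d$.
\item[(b)] If $(\mathcal{X}_b)_{\overline{\kappa(b)}}$ is a degree $e\le d$ cover of $\mathbb{P}^1$ or of an elliptic curve, or is (the normalization of) a DF curve of type $e$, then the same is true of $X_{\overline{K}}$.
\end{enumerate}
Given these, \Cref{thm:geom_char_deg_d_nf} applied to the fibre yields the conclusion. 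Minimality of $d$ then forces the degree to be exactly $d$, since a degree-$e$ cover of $\mathbb{P}^1$ or of a positive-rank elliptic curve (after a finite extension) would give $e\in\wp(X/K)$.

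For (b), I will use Hilbert schemes or moduli spaces of maps. The locus in $U$ of fibres admitting a degree-$e$ map to $\mathbb{P}^1$ or to a genus-one curve, or being birational to a DF curve of type $e$, is a constructible set. It is a finite union of images of proper (or finite type) relative Hom/Hilbert schemes. If the generic fibre is not in this locus, the locus avoids a dense open, so I shrink $U$ accordingly.

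For (a), the usual approach uses $\mathrm{Sym}^d$ and the Abel--Jacobi map. Points of degree $d$ are dense exactly when the image $W_d$ of $X^{(d)}(K)$ in $\mathrm{Pic}^d$ satisfies a Faltings-type condition, namely that it contains positive-dimensional translates of abelian subvarieties $A$ with $A(K)$ Zariski dense (Faltings' theorem on subvarieties of abelian varieties, valid over finitely generated fields of characteristic 0). I then need a specialization point $b$ at which the relevant abelian subvariety still has dense rational points and the fibre of the Abel--Jacobi map still supplies degree-$d$ points rather than lower degree ones. Specializing rank is the delicate part. Néron's and Silverman's specialization theorems say that for $b$ outside a thin set of Hilbert-irreducibility type, $A(K)\to A_b(\kappa(b))$ is injective. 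Hence density of $A(K)$, which after a finite extension reduces to rank and Zariski density, is preserved.

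\emph{Case (ii).} Here $k$ is algebraically closed. I will write $K$ as a limit over finitely generated $\mathbb{Q}$-subalgebras and descend $X$, together with the points witnessing density, to a field $K_0$ finitely generated over $\mathbb{Q}$. The complication is that $k\cap K_0$ may contain transcendentals, so it is better to view $K$ as the function field of $B/k$ and specialize along $k$-points of a spread-out family. Points of degree $d$ on $X$ that do not come from a constant curve give, via Lang--Néron, a nonconstant part. The hypothesis that $X_{\overline{K}}$ is not dominated by a cover of degree $\le d$ of a constant curve ensures that the abelian varieties $A$ appearing in $W_d$ have trivial $K/k$-trace. Lang--Néron then makes $A(K)$ finitely generated, and I can descend everything to a finitely generated field over $\overline{\mathbb{Q}}$ and reduce to case (i).

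The main obstacle is step (a), in particular preserving density of degree-$d$ points under specialization. The tool I will try first is Silverman's specialization theorem applied to the abelian subvarieties of $\mathrm{Pic}$ given by Faltings, combined with Hilbert irreducibility so that the degrees of the points do not drop.
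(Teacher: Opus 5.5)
Your outer architecture matches the paper's: spread out, apply \Cref{thm:geom_char_deg_d_nf} on closed fibres, and transport the conclusion back to $X_{\overline{K}}$ through images of quasi-projective Hom-type schemes. Your step (b) is the contrapositive of the paper's specialization lemmas, though you would still need a universal elliptic family and a parameter space for the DF case, which the paper builds in \Cref{lem:DF_functor_representable}. The gap is step (a). The inference ``$A(K)\to A_b(\kappa(b))$ injective, hence density is preserved'' is not valid, because $A_b$ can have abelian subvarieties that $A$ lacks. For example, if $A$ is geometrically simple with $A(K)$ of rank one and $A_b$ is isogenous to $E_1\times E_2$, injectivity does not stop the specialized generator from lying in $E_1$ up to torsion.

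More to the point, you are missing the idea that makes this step free: you never need to transport $K$-points. First reduce to $\gon(X_{\overline{K}})>d$; otherwise you are done. Then take the abelian variety inside $\mathrm{Sym}^d(X_{\overline{K}})$ and pull back the universal divisor to get $Z\to A$ finite of degree $d$ with $Z\to X_{\overline{K}}$ surjective, as in \Cref{thm:degree_d_points_from_abvar_intro}. Spread this whole diagram out over a smooth affine $\mathbb{Q}$-scheme $S$ with $\mathcal{A}\to S$ an abelian scheme. Every closed fibre $\mathcal{A}_s$ is an abelian variety over a number field, so it has potentially dense rational points \emph{whatever $A(K)$ is}. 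Hence $\min\wp(\mathcal{X}_s/\kappa(s))\le d$ at \emph{every} closed point. Drops in degree are harmless, since minimality of $d$ restores equality at the end. No Hilbert irreducibility, thin sets or rank specialization are needed: the closed points of $S_{\overline{\mathbb{Q}}}$ are covered by finitely many sets $\Sigma^{d'}_{\star}$, one of them is dense, and the Hom-scheme argument lifts it to the generic fibre.

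Case (ii) has a more serious hole: nothing in your sketch produces the abelian variety. Lang--N\'eron controls Mordell--Weil groups modulo trace. It says nothing about how infinitely many $K$-points on the subvariety $\mathrm{Sym}^d(X)\subseteq\mathrm{Jac}(X)$ are organized. Moreover, $\mathrm{Jac}(X)$ itself may have nontrivial trace, since the hypothesis only excludes covers of degree $\le d$, so finite generation of $\mathrm{Jac}(X)(K)$ is not available either.

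What you need is the function-field Mordell--Lang / geometric Bombieri--Lang theorem for subvarieties of abelian varieties (Raynaud, Buium, Hrushovski; Xie--Yuan, Gao). It says that the $K$-points outside the algebraic special locus lie in finitely many positive-dimensional subvarieties with constant normalization, together with finitely many points. The non-domination hypothesis is used precisely to kill these constant components. Pulling back the universal divisor to such a component gives a degree-$d$ cover of a constant variety dominating $X_{\overline{K}}$, which \Cref{lem:dominated_deg_d_cover_variety_implies_curve} turns into a cover of degree at most $d$ of a constant curve. Your claim that the abelian varieties in $W_d$ have trivial trace does not address this. A constant curve of genus $\ge 2$ inside $\mathrm{Sym}^d(X)$ would give infinitely many $K$-points with no abelian variety involved, and only the structure theorem plus the hypothesis rules it out.

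Once the diagram over $\overline{K}$ exists, case (ii) needs neither Lang--N\'eron nor descent of points. The diagram is defined over a smooth affine $\mathbb{Q}$-scheme, and the specialization argument above applies unchanged.
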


We note that in \Cref{thm:main_result_intro} it is not possible to weaken the assumption \say{$X_{\overline{K}}$ is not dominated by a cover of degree at most $d$ of a constant curve} to \say{$X_{\overline{K}}$ is not a cover of degree at most $d$ of a constant curve}, as demonstrated by \Cref{ex:vb}.

The first step towards proving \Cref{thm:main_result_intro} is the following geometric characterization of smooth projective curves over function fields with minimal potential density degree equal to $d$. It is a function field analogue of \cite[Theorem~4.2]{BELOV19}, see also \Cref{rmk:abvar_over_nf}.

\begin{theoremintro}\label{thm:degree_d_points_from_abvar_intro}
    Let $K$ be a function field over a field $k$ of characteristic 0, let $X/K$ be a smooth projective curve, and let $d:=\min(\wp(X/K))$. Assume that one of the following holds:
    \begin{enumerate}
        \item[(i)] $k = \mathbb{Q}$ and $X_{\overline{K}}$ has gonality $> d$; or,
        \item[(ii)] $k$ is algebraically closed and $X_{\overline{K}}$ is not dominated by a cover of degree at most $d$ of a constant curve.
    \end{enumerate}
    Then there exists a diagram
    \begin{center}
        \begin{tikzcd}
	Z & {X_{\overline{K}}} \\
	{A,}
	\arrow[two heads, from=1-1, to=1-2]
	\arrow["{\textup{deg }d}"', from=1-1, to=2-1]
\end{tikzcd}

    \end{center}
    where $A$ is an abelian variety over $\overline{K}$ with $\dim(A) < \textup{max}(d,2)$, where $Z$ is a normal variety over $\overline{K}$, the morphism $Z\rightarrow A$ is finite of degree $d$ and $Z \rightarrow X_{\overline{K}}$ is surjective. Furthermore, there exists no such diagram where the degree of the morphism $Z\rightarrow A$ is smaller than $d$.
\end{theoremintro}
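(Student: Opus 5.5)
We follow the proof of \cite[Theorem~4.2]{BELOV19}, replacing Faltings' theorem on subvarieties of abelian varieties over number fields by its function field analogues. First we replace $K$ by a finite extension $l$ such that the degree $d$ points on $X_l$ are Zariski dense; this changes neither the hypotheses nor the conclusion, which only concern $X_{\overline{K}}$. After a further finite extension we may assume $X(l)\neq\emptyset$. We then consider the symmetric power $\sym^d X$ and the Abel--Jacobi map $\phi_d\colon \sym^d X\to \mathrm{Pic}^d_X\cong J_X$, the latter isomorphism coming from a chosen rational point. A point of degree $d$ gives an $l$-point of $\sym^d X$. Density of these points on $X$ means their images in $\sym^d X$ cannot all lie in a proper closed subset $V$ whose image meets $X$ only in a finite set. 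Let $W\subseteq \sym^d X$ be the Zariski closure of a suitable infinite family of such points, chosen so that $W$ dominates $X$ under the correspondence $\sym^d X\supseteq\{(D,x): x\in D\}\to X$.

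Next we analyze $\phi_d$ restricted to $W$. If $\phi_d|_W$ has a positive-dimensional fibre through a dense set of points, that fibre lies in a linear system $|D|$ of dimension $\ge 1$. Such a linear system yields a map $X\to\bbP^1$ of degree $\le d$. In case (i) this contradicts gonality $>d$. In case (ii) it is excluded because $\bbP^1$ is constant, so a cover of $\bbP^1$ of degree $\le d$ is dominated by a cover of a constant curve. So we may assume that $\phi_d$ is finite on a dense set, and hence that $\phi_d(W)=:Y\subseteq J_X$ is a subvariety containing a Zariski dense set of $l$-points.

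The key input is now a Mordell--Lang type statement over function fields. In case (ii) we use the function field version of Faltings' theorem due to Raynaud, Buium and Hrushovski: a subvariety of an abelian variety with dense rational points is, up to translation, of the form $B+ (Y_0)$, with $B$ an abelian subvariety and $Y_0$ coming from the $K/k$-trace, i.e.\ from a constant variety. In case (i) we instead specialize. We spread $X$, $W$ and $Y$ over a $\bbQ$-variety $S$ with function field $K$. By a Hilbert irreducibility / Silverman specialization argument, for a suitable number field point $s\in S$ the specialized points stay dense, the gonality stays $>d$, and Faltings' theorem applies to $Y_s$. This shows that $Y_s$, and hence $Y$ by a constructibility argument over $S$, is a translate of an abelian subvariety $A$.

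Finally we set $Z$ to be the normalization of the relevant component of $\phi_d^{-1}(A)$ pulled back to the incidence correspondence, so that $Z\to A$ is finite of degree $d$ and $Z\to X_{\overline K}$ is surjective. The bound $\dim A<\max(d,2)$ comes from $\dim W\le d$ together with the earlier exclusion of $\dim A=d$: if $\dim A=d\ge 2$, then $\sym^d X\to J$ would be birational onto an abelian variety, which is impossible. The minimality of $d$ follows from minimality of $\min\wp$: a diagram of smaller degree $e$ would pull back the dense rational points of $A$ over some extension to a dense set of points of degree $\le e$, contradicting the definition of $d$. I expect the main obstacle to be case (ii). There we must show that the trace part $Y_0$ cannot contribute, and this is exactly where the hypothesis that $X_{\overline K}$ is not dominated by a degree $\le d$ cover of a constant curve is used: a nontrivial trace component would produce such a constant curve and a cover of it of degree $\le d$. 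The specialization step in (i) is also delicate, since density and gonality must be preserved in the specialization.
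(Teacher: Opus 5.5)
Your outline for case (ii) is essentially the paper's argument. You pass to $\sym^d X$, use injectivity of the Abel--Jacobi map (from gonality $>d$), and invoke the function-field Bombieri--Lang/Mordell--Lang theorem for subvarieties of abelian varieties; the paper uses Gao's formulation via the special locus. The hypothesis on constant curves then kills the constant contribution. The step you flag there is the paper's ``$r=0$'' argument combined with \Cref{lem:dominated_deg_d_cover_variety_implies_curve}. That lemma passes from a degree-$d$ cover of a constant \emph{variety} dominating $X_{\overline K}$ to a cover of degree $\le d$ of a constant \emph{curve}.

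The genuine gap is case (i). You want to show that $Y=\phi_d(W)$ is a translate of an abelian subvariety by specializing to a number-field point $s$ and applying Faltings there. This rests on the claim that ``by a Hilbert irreducibility / Silverman specialization argument'' the specialized points stay Zariski dense in $Y_s$, and neither result gives this. Hilbert irreducibility concerns irreducibility and Galois groups of specializations. N\'eron's and Silverman's theorems give injectivity of $J(l)\to J_s(\kappa(s))$ for most $s$, but injectivity says nothing about Zariski closures. A dense finitely generated subgroup of $J$ can specialize injectively into a proper abelian subvariety of $J_s$, for instance when $J_s$ splits or acquires extra endomorphisms. Likewise, a dense subset of $Y$ can specialize into a proper closed subset of $Y_s$, such as the union of the abelian translates contained in $Y_s$. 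Without density, Faltings' theorem for $Y_s$ tells you nothing about $Y$. Moreover, even granting density at one $s$, knowing that a single fibre $Y_s$ is a translate of an abelian subvariety does not imply the same for $Y$. A constructibility argument (e.g.\ via semicontinuity of the stabilizer dimension) needs this on a Zariski dense set of $s$, which your sketch does not supply.

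The detour is unnecessary. Faltings' theorem is available over any field finitely generated over $\bbQ$: take its Mordell--Lang form for finitely generated subgroups, together with the Lang--N\'eron finite generation of $J(l)$. So density of $Y(l)$ in $Y$ directly forces $Y$ to be a translate of an abelian subvariety, and your remaining steps then go through. This is how the paper treats case (i). It notes that \cite[Theorem~4.2]{BELOV19} extends to finitely generated fields because \cite{Fal94} does, and reads off a non-AV-isolated point of degree $d$, which is exactly the required diagram.
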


If $(i)$ is satisfied, \Cref{thm:degree_d_points_from_abvar_intro} is an application of the main result of \cite{Fal94}. In the second case, it is an application of the geometric Bombieri--Lang conjecture \cite[Conjecture~1.1]{XY23a} as proven for a large class of varieties (including closed subvarieties of abelian varieties) in \cite[Theorems~1.2~and~1.5]{XY23a}, \cite[Theorem~1.1]{XY23b} and \cite[Theorem~1.1]{Gao25}. We note that the formulation in \emph{loc.\ cit.\ }provides the precise statement we need, but that this statement can  presumably also be deduced from combining earlier results on the Bombieri--Lang conjecture \cite{Ray83}, \cite{Bui92}, \cite{Hru96}.

The proof of \Cref{thm:main_result_intro} is not obtained by an adaption of the proof of \Cref{thm:geom_char_deg_d_nf}, but rather, we reduce to the case over number fields. We achieve this as follows. The diagram we get from \Cref{thm:degree_d_points_from_abvar_intro} is defined over an affine $\mathbb{Q}$-scheme $S$ which is smooth over $\mathbb{Q}$. We consider specializations of this diagram over $S$ to closed points $s\in S$, which give diagrams over number fields. Moreover, the specialization of the model of $X$ over $S$ to $s$ has a potentially dense set of points of degree at most $d$ induced by the abelian variety in the diagram, hence we can apply \Cref{thm:geom_char_deg_d_nf}. The final step is to deduce the desired property of $X_{\overline{K}}$ from these specializations. For this, we use the following result.

\begin{theoremintro}[Lemmas \ref{lem:specialization_thm_genus_0}, \ref{lem:specialization_thm_genus_1} and \ref{lem:specialization_thm_DF}]\label{thm:specialization_result_intro}
    Let $S$ be a noetherian integral scheme over an algebraically closed field $k$, let $K$ be the function field of $S$, and let $X\rightarrow S$ be a smooth projective morphism such that the geometric fibers are curves. Fix a positive integer $d$, and consider the sets
    \begin{align*}
        \Sigma^d_0             & := \bigl\{ s\in S(k) \bigm| X_s\textup{ is a cover of degree }d\textup{ of }\mathbb{P}^1_{k} \bigr\},           \\
        \Sigma^d_1             & := \bigl\{s\in S(k) \bigm| X_s\textup{ is a cover of degree }d\textup{ of some elliptic curve over }{k}\bigr\}, \\
        \Sigma^d_{\textup{DF}} & :=        \Biggl\{s\in S(k) \Biggm|
        \vcenter{
            \setbox0 = \hbox{\textup{there exists an $m$ dividing $d$ such that $X_s$ is a}}
            \copy0
            \hbox to\wd0{\hfill \textup{cover of degree~$m$ of a DF curve of type $\frac{d}{m}$}\hfill}
        }
        \Biggr\}.
    \end{align*}
    Consider the geometric generic fiber $X_{\ol{K}}$. The following statements hold.
    \begin{enumerate}
        \item If $\Sigma^d_0$ is dense in $S$, then $X_{\ol{K}}$ is a cover of degree $d$ of $\mathbb{P}^1_{\overline{K}}$.
        \item If $\Sigma^d_1$ is dense in $S$, then $X_{\ol{K}}$ is a cover of degree $d$ of an elliptic curve over $\overline{K}$.
        \item If $\Sigma^d_{\textup{DF}}$ is dense in $S$, then there exists an $m>0$ dividing $d$ such that $X_{\overline{K}}$ is a cover of degree $m$ of a DF curve of type $\frac{d}{m}$.
    \end{enumerate}
\end{theoremintro}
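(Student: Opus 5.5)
The three parts share one principle: the condition on a fiber is the non-emptiness of a fiber of a morphism that is locally of finite type (in fact proper, or at least constructible in image) over $S$. By Chevalley's theorem, the set of $s$ where the condition holds is constructible. A constructible set containing a dense set of closed points contains a dense open subset, hence contains the generic point. The condition then holds at the generic point, and hence on $X_{\overline{K}}$, possibly after a finite extension of $K$. After replacing $S$ by a dense open subscheme, I may assume $S$ is affine and of finite type over $k$, so that closed points are exactly $S(k)$ and Chevalley applies. The constructibility reduction is needed in the noetherian case to make ``dense set of closed points'' meaningful. If $S$ is not of finite type, I will first spread out over a finitely generated $k$-subalgebra.

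For (1), I use relative Hom schemes. A degree $d$ cover $X_s\to\mathbb{P}^1$ is a $k$-point of $\mathrm{Hom}_d(X/S,\mathbb{P}^1_S)$, the open-and-closed part of the Hom scheme consisting of morphisms of degree $d$. This is a quasi-projective $S$-scheme, and degree is locally constant in flat families. Its image in $S$ is constructible and contains $\Sigma^d_0$, so it contains a dense open set, in particular the generic point. Hence there is a finite extension $L/K$ with a degree $d$ morphism $X_L\to\mathbb{P}^1_L$, and base change gives the cover over $\overline{K}$.

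For (2), I cannot use a fixed target. Instead I parametrize elliptic curves together with a degree $d$ map. Degree $d$ maps $X_s\to E$, up to translation, correspond via pullback to abelian subvarieties $E\hookrightarrow J(X_s)$; alternatively, the image of $X_s$ in $\mathrm{Pic}$ under $E^\vee$ gives such a map. The cleanest construction is a relative Hom scheme to the universal elliptic curve over a fine moduli space with level structure, say $Y_1(N)$ for some $N\ge 4$, with its universal curve $\mathcal{E}\to Y_1(N)$. I take $\mathrm{Hom}_d(X\times Y_1(N)/S\times Y_1(N),\ \mathcal{E}\times S)$ and project to $S$. Every elliptic curve over the algebraically closed field $k$ admits a $\Gamma_1(N)$-structure, so this image contains $\Sigma^d_1$. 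The same argument as in (1) then gives a degree $d$ map from $X_L$ to an elliptic curve over a finite extension $L$.

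Part (3) is the main obstacle, because DF curves of type $d/m$ must be parametrized by a finite-type family and the DF property must be constructible. Following the definition of a DF curve, presumably an intersection of a suitable curve in an abelian surface (or a product of elliptic curves) with a translate, of type determined by $d/m$, I parametrize the data: abelian surfaces with level structure (a fine moduli scheme of finite type, one for each polarization type bounded in terms of $d$), together with the Hilbert scheme of curves of the relevant class. Then I take the relative Hom scheme of degree $m$ maps from $X$ to the normalization of the universal DF curve, over each of the finitely many divisors $m$ of $d$. The image in $S$ is constructible and contains $\Sigma^d_{\mathrm{DF}}$. Since there are finitely many $m$, one $m$ gives a dense subset, and hence the generic point lies in the image. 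The delicate points are two. First, normalization in families: I would stratify so that the normalization of the fibers is simultaneous. Second, every DF curve over $k$ must appear in a bounded family, which needs the bounded degree or class fixed by the type. Each fiber $X_s$ has fixed genus, so the DF curves it covers have bounded genus and bounded polarization data, and this boundedness makes the argument work.
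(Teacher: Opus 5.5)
Your arguments for (1) and (2) are essentially the paper's. In each case you use a quasi-projective relative Hom scheme: into $\mathbb{P}^1_S$ for (1), and into $\mathcal{E}\times S$ over $U\times S$ for (2), where $\mathcal{E}\to U$ is a family in which every elliptic curve over $k$ occurs as a fiber. Its image in $S$ contains the dense set $\Sigma^d_\star$, hence the generic point. Two small remarks:
\begin{itemize}
\item Chevalley and the spreading out are more than you need, since a dominant quasi-compact morphism already hits the generic point.
\item With $Y_1(N)$ you must take $N$ invertible in $k$.
\end{itemize}

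Part (3) has a genuine gap: you are parametrizing the wrong objects. By definition, a DF curve of type $e$ is a curve $C\subset\mathrm{Sym}^2(E)$, for $E$ an elliptic curve, whose numerical class is $(e+m')H-m'F$ for some $1\le m'\le e$. The ambient surface is the ruled surface $\mathrm{Sym}^2(E)\to E$ given by addition, not an abelian surface. A moduli space of polarized abelian surfaces with a Hilbert scheme of curves on them therefore yields no family whose image in $S$ contains $\Sigma^d_{\mathrm{DF}}$.

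Your boundedness justification also fails as stated. A bound on the genus of $X_s$ only bounds the geometric genus of the image, and curves of bounded geometric genus on a surface are not bounded (consider rational plane curves of every degree). Boundedness has to come from the fixed numerical class, which here takes only finitely many values.

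The missing idea is to reuse the family $\mathcal{E}\to U$ from (2) and map $X$ directly into $\mathrm{Sym}^2_U\mathcal{E}$:
\begin{itemize}
\item Inside $\underline{\mathrm{Hom}}_{U\times S}(U\times X,\mathrm{Sym}^2_U\mathcal{E}\times S)$, take the open subscheme of finite morphisms.
\item Decompose it by Hilbert polynomials of graphs with respect to powers of the relatively ample bundles $p_1^*\mathcal{M}\otimes p_2^*\mathcal{H}$ and $p_1^*\mathcal{M}\otimes p_2^*(\mathcal{H}\otimes\mathcal{F})$. Here $\mathcal{M}$ is relatively ample on $X/S$, and $\mathcal{H},\mathcal{F}$ are the relative versions of $H,F$.
\item If $\mathrm{Im}(f_t)$ has class $aH+bF$, these polynomials record $\deg(f_t)(a+b)$ and $\deg(f_t)(2a+b)$.
\item Prescribing these to be $d$ and $2d+l$, for $l$ in a finite range, says exactly that $m:=\deg(f_t)$ divides $d$ and that $\mathrm{Im}(f_t)$ is DF of type $d/m$.
\end{itemize}
This gives finitely many quasi-projective pieces over $U\times S$ whose image in $S$ contains $\Sigma^d_{\mathrm{DF}}$. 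From there the argument is the same as in (2).

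Your normalization detour and the stratification for simultaneous normalization are also unnecessary. A cover of degree $m$ of a DF curve $C\subset\mathrm{Sym}^2(E)$ is the same thing as a finite morphism $X_s\to\mathrm{Sym}^2(E)$ with image $C$ and degree $m$ onto it, so one never needs to pass to $\widetilde{C}$.
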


The proofs of the three parts of \Cref{thm:specialization_result_intro} all follow a similar strategy. Namely, we first show that~$\Sigma^d_{\star}$ is contained in the image of a quasi-projective morphism $H\rightarrow S$ of schemes using the theory of Hom-schemes. The assumptions made in \Cref{thm:specialization_result_intro} imply that this morphism is dominant and thus that the generic point of $S$ is contained in the image of $H\rightarrow S$.

More precisely, the proof of part $(1)$ relies on the fact that the Hom-scheme $\homs^d_S(X,\mathbb{P}^1_S)$ that classifies morphisms $X\rightarrow \mathbb{P}^1_S$ of degree $d$, is quasi-projective over $S$ (see \cite[2.1]{Deb01} and \cite[Theorem~3.2]{Gro61}). Part $(2)$ admits a similar strategy, except that we have to replace $\mathbb{P}^1_S$ by a suitable universal family of elliptic curves.

For part $(3)$, we first define a functor that is represented by a quasi-projective scheme which plays a similar role as $\homs^d_S(X,\mathbb{P}^1_S)$ in the proof of part $(1)$, see \Cref{def:DF_functor}. We show the representability of this functor by a quasi-projective scheme in \Cref{lem:DF_functor_representable} using the decomposition of the Quot scheme.

\subsection{Conventions}
For $k$ a field, $\overline{k}$ denotes a fixed algebraic closure. A \textit{variety} $X$ over $k$ is a geometrically integral, separated scheme of finite type over $k$.

Let $X$ be a scheme. If $k$ is a field, and $x\in X(k)$, we sometimes write $x$ when we mean the image of the morphism $x$. Similarly, if $A\subseteq X(k)$, we sometimes interpret $A$ as a set of points in $X$, namely the images of the morphisms $\spec(k)\rightarrow X$ in $A$. In particular, when we say that $A$ is \textit{dense} in $X$, we mean that the union of the images of the morphisms in $A$ is a dense subset of $X$. The residue field of a point $x\in X$ is denoted by $\kappa(x)$.

Let $X$ be a scheme over a scheme $S$. If $T\rightarrow S$ is another scheme over $S$, we write $X_T = X \times_S T$ for the base change. Moreover, if $s\in S$, we write $X_s$ for the base change of $X$ along $\spec(\kappa(s)) \rightarrow S$. Finally,  if $Y$ is another scheme over $S$, and $f\colon X\rightarrow Y$ is a morphism over $S$, then for $s\in S$ we write $f_s$ for the induced morphism $X_s\rightarrow Y_s$ on fibers.

A \textit{cover of degree $d$} of an integral scheme $X$ is a finite morphism $Y\rightarrow X$, where $Y$ is a reduced scheme with irreducible components $Y_1, \ldots, Y_n$ such that each restriction $Y_i \rightarrow X$ is surjective and the sum of the degrees of the restrictions $Y_i \rightarrow X$ is equal to $d$.

If $S$ is a scheme, $\textup{Sch/S}$ denotes the category of schemes over $S$. The category of sets is denoted by $\textup{Sets}$.

Let $X$ be a curve over a field $k$. Then the \textit{gonality} of $X$ is given by \[\textup{gon}(X) = \min\left\{\deg(f)\bigm| f\colon X\rightarrow\bbP^1 \textup{ non-constant}\right\}.\]

\subsection{Acknowledgements}
I am grateful to Ariyan Javanpeykar for many insightful and encouraging discussions, for introducing me to this topic and for helpful feedback on an earlier version of this work. I thank Finn Bartsch for suggesting \Cref{ex:vb} and for helpful comments on an earlier draft. I also thank Ben Moonen for several useful remarks.

\section{Preliminaries}\label{sec:preliminaries}

\subsection{The symmetric product}\label{sec:symd}

Let $X/k$ be a smooth projective curve and $d$ a positive integer. Consider the $d$-th symmetric product $\sym^d(X)$ of $X$, which is the quotient of the $d$-fold product $X^d$ of $X$ over $k$ by the action of the symmetric group $S_d$ given by permuting the factors. As $\sym^d(X)$ parametrizes the effective divisors of degree $d$ on $X$ \cite[Theorem~3.13]{Mil86}, a rational point of $\sym^d(X)$ corresponds to a divisor $D = \sum_{i=0}^n m_iP_i$ on $X$ such that $\deg(D) = \sum_{i=0}^{n} m_i\cdot\deg(P_i) = d$. In particular, the set of points of degree at most $d$ on $X$ is infinite if and only if $\sym^d(X)(k)$ is infinite.

We note that there also exists a relative version of the symmetric product of a curve. Let $S$ be a scheme over a field $k$, and $X \rightarrow S$ a smooth projective morphism such that the geometric fibers are curves. Then $S_d$ acts on the $d$-fold product $X\times_S \dots \times_S X$ of $X$ over $S$ by permuting the factors. In this case, the geometric quotient $\sym^d_S(X)$ of $X\times_S \dots \times_S X$ by $S_d$ also exists \cite[Theorem~1, p.111]{Mum70}. Furthermore, $\sym^d_S(X)$ is naturally a scheme over $S$, and for all $s\in S$ we have a natural isomorphism $\sym^d_S(X)_s \cong \sym^d(X_s)$.

\subsection{Constant varieties} \label{sec:isotrivial_varieties}

Suppose $K$ is a function field over an algebraically closed field $k$. Let $X_0$ be a variety over $k$, and consider the constant variety $X := (X_0)_K$. Then $X$ has a dense set of rational points. Indeed, $X_0(k)$ is dense in $X_0$, and this set induces a dense set of $K$-points on $X$. Similarly, if $X_L$ is constant for some finite extension $L$ of $K$, then $X$ has a \emph{potentially} dense set of rational points.

More generally, any variety $X$ over $K$ such that $X_{\overline{K}}$ is dominated by a cover of degree at most $d$ of a constant variety has a potentially dense set of points of degree at most $d$. Indeed, then there exists a finite extension $L/K$ such that $X_L$ is dominated by a cover of degree at most $d$ of a constant variety, say~$Y$. Then the dense set of rational points of $Y$ can be pulled back to a dense set of points of degree at most $d$ on $X_L$.

In this case the origin of points of degree at most $d$ on $X$ is clear. Hence, in our results, we assume that $X_{\overline{K}}$ is not dominated by a degree at most $d$ cover of a constant variety. If $X$ is a curve, then by the following lemma, it suffices to assume that $X_{\overline{K}}$ is not dominated by a degree at most $d$ cover of a constant \textit{curve}.

\begin{lemma}\label{lem:dominated_deg_d_cover_variety_implies_curve}
    Let $K$ be a function field over a field $k$ of characteristic 0. Let $X$ be a curve over $K$ and suppose that $X_{\overline{K}}$ is dominated by a cover of degree $d$ of a constant variety over $\overline{K}$. Then $X_{\overline{K}}$ is dominated by a cover of degree at most $d$ of a constant curve over $\overline{K}$.
\end{lemma}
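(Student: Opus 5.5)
Write $L=\overline{K}$ and let $Y_0$ be a variety over $\ol{k}$ (or over the relevant constant field) with $Y:=(Y_0)_L$. By hypothesis there is a cover $\pi\colon Z\to Y$ of degree $d$ and a dominant rational map $\varphi\colon Z\dashrightarrow X_L$. The plan is to cut $Y_0$ down to a constant curve $C_0\subseteq Y_0$ and restrict both $\pi$ and $\varphi$. We then check that $\pi$ stays a cover of degree at most $d$ and that the composite to $X_L$ stays dominant.

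First, reduce to the case where $Z$ is irreducible. Some irreducible component $Z_1$ of $Z$ must dominate $X_L$, since the images of the components cover a dense subset and $X_L$ is irreducible. The restriction $Z_1\to Y$ is a cover of degree $d_1\le d$. So we may assume $Z$ is integral and $\pi$ is finite surjective of degree at most $d$. Replacing $Y_0$ by its normalization (a variety over $k$, so still constant) and $Z$ by its normalization, we may also assume both are normal.

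Next, choose the curve. If $\dim Y_0=0$ there is nothing to do, since then $Z$ is a point and cannot dominate $X_L$. So assume $\dim Y_0\ge 1$. The fibers of $\varphi$ on a dense open $U\subseteq Z$ have dimension $\dim Z-1$. Pick a closed point $x\in X_L$ in the image of $U$. Then $W:=\ol{\varphi|_U^{-1}}$ of a general point is a proper closed subset of $Z$, and so is $\pi(W)\subsetneq Y$.

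The key step is to find a curve $C_0\subseteq Y_0$, defined over the constant field, such that $C:=(C_0)_L$ is not contained in the bad locus. By this we mean the indeterminacy image, the image of $Z\setminus U$, and the images of the fibers, so that $\varphi|_{\pi^{-1}(C)}$ is non-constant. I would take $C_0$ to be a general complete-intersection curve through two general $k$-points $y_1,y_2$ of $Y_0$. By Bertini over the algebraically closed (or infinite) field $k$, $C_0$ is an integral curve, and $\pi^{-1}(C)$ is a cover of $C$ of degree $d$ in the sense of the conventions. Here we use that $\pi$ is finite flat over a dense open subset and that $\pi^{-1}(C)$ has reduced components surjecting onto $C$ for general $C$; we then take the reduced structure.

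To get non-constancy, note that $Y_0(k)$ is dense in $Y$. So we may choose $y_1,y_2\in Y_0(k)$, lying in the locus over which $\varphi\circ\pi^{-1}$ is defined, such that some point over $y_1$ and some point over $y_2$ map to distinct points of $X_L$. This is possible because otherwise $\varphi$ would be constant on the dense set $\pi^{-1}(Y_0(k))$, contradicting dominance. Since the composite is non-constant on some component of $\pi^{-1}(C)$, that component dominates the curve $X_L$, and its degree over $C$ is at most $d$.

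The main obstacle is the Bertini step. We must ensure that a curve defined over $k$ (not over $L$) through prescribed $k$-points avoids the closed bad loci, which are defined only over $L$. We also need the preimage to be reduced with every component surjecting onto $C$. I would handle this by noting that the bad loci are proper closed subsets of $Y$, and that general members of a linear system over $k$ are general over $L$: the parameter space's $k$-points are dense in its base change to $L$. Finally, when $k$ is not algebraically closed, we first pass to $\ol{k}$, which is harmless since we work over $\overline{K}$.
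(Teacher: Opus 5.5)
\paragraph{The gap.} Your last step does not go through. You find $z_1\in\pi^{-1}(y_1)$ and $z_2\in\pi^{-1}(y_2)$ with $\varphi(z_1)\neq\varphi(z_2)$, and conclude that $\varphi$ is non-constant on some irreducible component of $\pi^{-1}(C)$. That does not follow.

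You explicitly allow $\pi^{-1}(C)$ to be reducible. Even for irreducible $Z$ the preimage of a curve can split, for example a double cover restricted to a curve along which it becomes trivial. Then $z_1$ and $z_2$ may lie on different components. For instance, $\pi^{-1}(C)$ could consist of $D_1\ni z_1,z_2'$ and $D_2\ni z_1',z_2$, with $\varphi$ contracting $D_1$ to $\varphi(z_1)=\varphi(z_2')$ and $D_2$ to $\varphi(z_2)=\varphi(z_1')$.

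Your ``bad locus'' paragraph does not rule this out. Asking that $C$ not lie in ``the images of the fibres'' is not an open condition, since the fibres of $\varphi$ cover $Z$. It also only aims at non-constancy on $\pi^{-1}(C)$ as a whole, not on a single component. To rescue your route as written, you would need a Bertini irreducibility theorem for $\pi^{-1}(C)$ with $C$ forced through $y_1,y_2$. That is a substantially harder input than what you cite, and the Bertini step you flag as the main obstacle is not where the difficulty lies.

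\paragraph{How the paper avoids it.} The paper uses a pigeonhole argument and needs no Bertini theorem.
\begin{enumerate}
\item In characteristic $0$, $\pi$ is étale over a dense open $U\subseteq Y$.
\item Choose $d+1$ points $P_0,\dots,P_d\in\pi^{-1}(U)$, lying over $k$-points of $Y_0$, whose images in $X_{\overline{K}}$ are pairwise distinct.
\item Take any constant curve $C$ through their images in $Y$. Then $\pi^{-1}(C\cap U)\to C\cap U$ is finite étale of degree $d$. So it is reduced, has at most $d$ irreducible components, and each component dominates $C\cap U$.
\item Some component therefore contains two of the $P_i$, so it is not contracted. Its closure is a cover of $C$ of degree at most $d$ dominating $X_{\overline{K}}$.
\end{enumerate}

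\paragraph{A two-point repair.} You can also keep two points but choose them more carefully.
\begin{enumerate}
\item Take $y_1,y_2$ in the étale locus with $\varphi(\pi^{-1}(y_1))\cap\varphi(\pi^{-1}(y_2))=\emptyset$. This is possible with $Z$ irreducible and dominating $X_{\overline{K}}$, as you arranged. Indeed, for the finite set $F=\varphi(\pi^{-1}(y_1))$, the set $\pi(\varphi^{-1}(F))$ has dimension $<\dim Y$.
\item By flatness, every component of $\pi^{-1}(C\cap U)$ surjects onto $C\cap U$.
\item Hence every component meets both fibres, and so is not contracted.
\end{enumerate}
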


\begin{proof}
    Let $f\colon Z\rightarrow Y$ be a cover of degree $d$ of a constant variety $Y \cong (Y_0)_{\overline{K}}$ such that $Z$ dominates~$X_{\overline{K}}$. Since $K$ is of characteristic 0, the morphism $f$ is generically étale. Hence there exists a dense open $U\subseteq Y$ such that $f^{-1}U\rightarrow U$ is étale \cite[Theorem~3.2.1]{Poo17}. Let $P_0,\ldots,P_d$ be closed points in $f^{-1}U$ with pairwise distinct images in $X_{\overline{K}}$. Let $Q_0,\ldots,Q_d$ be the images of $P_0,\ldots,P_d$ in $Y_0$. Then there exists a curve $C_0\subset Y_0$ passing through the points $Q_0,\ldots,Q_d$. Write $C$ for $(C_0)_{\overline{K}}$ and $C'$ for the intersection $C\cap U$. Then $C'$ is a curve and $C$ is a constant curve.

    Consider the pullback $D$ of $Z$ along $C\subseteq Y$, and the pullback $D'$ of $D$ along $C'\subseteq C$. Then the resulting map $D' \rightarrow C'$ is finite étale, as $C'\subseteq U$. Furthermore, it is of degree $d$, because it is generically of degree $d$. This implies that $D'$ is the union of at most $d$ curves. Since we chose $d+1$ points, one of these curves, say $D''$, contains two of the points $P_i$. This implies that $D''\rightarrow X_{\overline{K}}$ is non-constant, and hence dominant. Let $\overline{D''}$ be the closure of $D''$ in $D$. Then the diagram
    \begin{center}
        \begin{tikzcd}
	{\overline{D''}} & {X_{\overline{K}}} \\
	C
	\arrow[from=1-1, to=1-2]
	\arrow["{\textup{deg}\le d}", from=1-1, to=2-1]
\end{tikzcd}
    \end{center}
    shows that $X_{\overline{K}}$ is dominated by a cover of degree at most $d$ of a constant curve.
\end{proof}

One can wonder, in the case that $X_{\overline{K}}$ is dominated by a cover of degree at most $d$ of a constant curve, whether $X_{\overline{K}}$ itself admits a morphism of degree at most $d$ to some constant curve (or variety). This is not true in general, as demonstrated by the following example.

\begin{example}\label{ex:vb}
    Let $B$ be a very general curve of genus $2$ over $\mathbb{C}$. Let $A$ be its Jacobian. Then $A$ is a simple principally polarized abelian variety over $\mathbb{C}$ and the Néron--Severi group of $A$ has rank 1. Let $\Theta$ be a divisor in the class that generates $\textup{NS}(A)$. Then $B\subseteq A$ is in the same class as $\Theta$.

    Let $d\ge 4$ and fix a function field $K/\mathbb{C}$. We now consider the base change $A_K$. By \cite[Proposition~5.15]{DF93}, a general curve $D\subseteq A_K$ in the linear system $|d\Theta_K|$ does not admit any morphisms of degree at most $2d$ to $\mathbb{P}^1$, and no dominant morphisms to any curve other than $\mathbb{P}^1$ and $D$ itself. In particular, it admits no morphism of degree at most $2d$ to a constant curve.

    There is a family of degree $2d$ divisors on $D$ parametrized by $A_K$. Indeed, let $a\in A_K$ be a point and consider the divisor $a + \Theta_K$ on $A$. Then $a + \Theta$ is algebraically equivalent to $\Theta_K$, and hence $(a + \Theta_K)\cdot D = 2d$. This means that the restriction of $a + \Theta_K$ to $D$ is a divisor of degree $2d$ on $D$. This family gives us an injective map $A_K \rightarrow \sym^{2d}(D)$ (this is the map $\psi$ in \cite[4.3]{DF93}).

    Consider the composition $B_K \rightarrow A_K \rightarrow \sym^{2d}(D)$, and the following diagram, where the square is cartesian.
    \begin{center}
        \begin{tikzcd}
	& X & {D\times\textup{Sym}^{2d-1}(D)} & D \\
	{} & {B_K} & {\textup{Sym}^{2d}(D)}
	\arrow[from=1-2, to=1-3]
	\arrow[from=1-2, to=2-2]
	\arrow[from=1-3, to=1-4]
	\arrow["{\textup{deg } 2d}"', from=1-3, to=2-3]
	\arrow[from=2-2, to=2-3]
\end{tikzcd}
    \end{center}
    Then $X\rightarrow B_K$ is of degree $2d$. Furthermore, $X\rightarrow D$ is dominant, because $B_K\rightarrow \sym^{2d}(D)$ has an image of positive dimension. We find that $D$ is dominated by a cover of degree $2d$ of the constant curve~$B_K$, while there exist no maps of degree at most $2d$ from $D$ to a constant curve.
\end{example}

\subsection{Debarre--Fahlaoui curves} \label{sec:DF-curves}

We give the definition of Debarre--Fahlaoui curves, which were first constructed by Debarre and Fahlaoui \cite[Section~4]{DF93} to provide counterexamples to the conjecture of Abramovich and Harris \cite{AH91} stating that all potentially dense sets of points of degree $d$ on smooth projective curves over number fields can by geometrically explained by a morphism of degree $d$ to a curve of genus 0 or 1. An introduction to Debarre--Fahlaoui curves is also given in Section~5 of \cite{KV25}.

Let $E$ be an elliptic curve over a field $k$ with neutral element $0\in E(k)$. Then the addition map $\pi\colon\sym^2(E) \rightarrow E$ gives $\sym^2(E)$ the structure of a ruled surface. Let $H$ be the divisor in $\sym^2(E)$ consisting of all effective divisors of degree $2$ on $E$ containing $0$, and let $F$ be the divisor corresponding to the fiber of $\pi$ over $0$. Then $H$ and $F$ span all numerical equivalence classes of divisors, and we have $H^2 = 1$, $H\cdot F = 1$ and $F^2 = 0$.

\begin{definition}\label{def:DF_curve}
    Let $X$ be a projective curve over a field $k$, and let $d$ be a positive integer. Then $X$ is a \textit{Debarre--Fahlaoui curve of type $d$} (or \textit{DF curve of type $d$} for short), if there exists an elliptic curve $E$ over~$k$ and a closed immersion $X \hookrightarrow \sym^2(E)$ such that the numerical equivalence class of $X$ considered as a curve in $\sym^2(E)$ is equal to $(d+m)H- mF$ for some $1\le m\le d$.
\end{definition}

\begin{remark}
    Debarre and Fahlaoui first considered DF curves \cite{DF93}, but only for the case $m=1$. In \cite{KV25}, Kadets and Vogt use the more general definition that we have given here.
\end{remark}

Suppose $X \subset \sym^2 E$ is a DF curve of type $d$. Then there is a family of effective divisors of degree $d$ on~$X$ parametrized by $E$, see the discussion below Definition 5.1 in \cite{KV25}. Thus, if $E$ has a potentially dense set of rational points, which is true if $k$ is a number field or function field, then $X$ has a potentially dense set of points of degree at most $d$.

Moreover, Debarre and Fahlaoui show that certain DF curves of type $d$ do not admit morphisms of degree $d$ to curves of genus 0 or 1 \cite[Propositions~5.7~and~5.14]{DF93}, disproving the conjecture of Abramovich and Harris. Kadets and Vogt extend this \cite[Proposition~5.2]{KV25}.

\vspace{6pt}

We now consider a relative setting. Let $k$ be a field, $U$ a scheme over $k$, and $\mathcal{E}$ an elliptic curve over $U$, i.e., $\mathcal{E}/U$ is a smooth proper group scheme with connected one-dimensional geometric fibers. Then there exists a section $\sigma_0\colon U \rightarrow \mathcal{E}$ such that for all $\lambda \in U$, the fiber $\mathcal{E}_{\lambda}$ is an elliptic curve with neutral element $\sigma_0(\lambda)$.

For each $\lambda \in U$, we have the divisors $H$ and $F$ on $\sym^2(\mathcal{E}_{\lambda})$ as defined above \Cref{def:DF_curve}. To make a distinction between these divisors for varying $\lambda$, we write $H_{\lambda}$ for $H$ and $F_{\lambda}$ for $F$ in this setting.

We are going to construct line bundles $\mathcal{H}$ and $\mathcal{F}$ on $\sym^2_U(\mathcal{E})$ such that for all $\lambda \in U$, divisors corresponding to the pulled back line bundles $\mathcal{H}_{\lambda}$ and $\mathcal{F}_{\lambda}$ are of rational equivalence class $H_{\lambda}$ and~$F_{\lambda}$ respectively. We achieve this as follows. Let $\Sigma_0 \subseteq \mathcal{E}$ be the divisor associated to the ideal sheaf of $\sigma_0$, and let $p_i\colon \mathcal{E}\times_U\mathcal{E} \rightarrow \mathcal{E}$ denote the projection onto the $i$-th coordinate. Furthermore, let $\varphi\colon\mathcal{E}\times_U\mathcal{E} \rightarrow \sym^2_U(\mathcal{E})$ be the geometric quotient map, and finally let $\pi\colon\mathcal{E}\times_U\mathcal{E} \rightarrow \mathcal{E}$ be given by the group law of $\mathcal{E}/U$. We define the line bundles $\mathcal{H}$ and $\mathcal{F}$ on $\sym^2_U(\mathcal{E})$ to be those associated to the divisors $\varphi_{*}p_1^*(\Sigma_0)$ and $\pi^*(\Sigma_0)$ respectively. Then $\mathcal{H}$ and $\mathcal{F}$ satisfy the above property.

\section{\texorpdfstring{Points of degree $d$ induced by abelian varieties}{Points of degree d points induced by abelian varieties}}\label{sec:abvar}

The goal of this section is to give a proof of \Cref{thm:degree_d_points_from_abvar_intro}. Let $K$ be a function field of characteristic~0 and $X/K$ a smooth projective curve. Set $d:=\min(\wp(X/K))$. We assume that one of the following holds.
\begin{enumerate}
    \item[$(i)$] $k = \mathbb{Q}$ and $\gon(X_{\overline{K}}) > d$; or,
    \item[$(ii)$] $k$ is algebraically closed and $X_{\overline{K}}$ is not dominated by a cover of degree at most $d$ of a constant curve.
\end{enumerate}
We first discuss case $(i)$. We then focus on the proof of \Cref{thm:degree_d_points_from_abvar_intro} in the second case. However, the proof for the latter case can be adapted to also work for case $(i)$, as explained at the start of the proof of case $(ii)$.

\begin{proof}[Proof of \Cref{thm:degree_d_points_from_abvar_intro} in case $(i)$]\label{rmk:abvar_over_nf}
    Consider the first case of \Cref{thm:degree_d_points_from_abvar_intro}, i.e., $K$ is a function field over $\mathbb{Q}$ and $\gon(X_{\overline{K}})>d$. Observe that \cite[Theorem~4.2]{BELOV19} easily generalizes from number fields to all fields which are finitely generated over $\mathbb{Q}$, because Faltings' theorem \cite{Fal94} generalizes to this setting as well. It now follows directly from this generalization of \cite[Theorem~4.2(1)]{BELOV19} that there exists a point $x\in X$ of degree $d$ that is not AV-isolated (see \cite[Definition~4.1]{BELOV19}), which is equivalent to the existence of a diagram as in \Cref{thm:degree_d_points_from_abvar_intro}.
\end{proof}

Assume that $X$ has an effective divisor $D$ of degree $d$, and identify the points of $\sym^d(X)$ with effective divisors of degree $d$ on $X$. Then the Abel-Jacobi map is given by
\[
    \varphi_{D}\colon \sym^d(X)  \rightarrow \textup{Jac}(X), \quad
    D'                                 \mapsto [D' - D],
\]
where $\textup{Jac}(X)$ denotes the Jacobian of $X$, and the brackets denote the linear equivalence class of a divisor. Note that the fibers of $\varphi_D$ are precisely complete linear systems of degree $d$. We find that $\varphi_D$ is injective if $\textup{gon}(X_{\overline{K}}) > d$ (which is implied by the assumption in $(ii)$), and thus $\varphi_D$ is a closed immersion in this case by \sp{04DG}, \sp{0B2G} and \cite[Theorem~5.1(b)]{Mil86}.

We then get a useful description of the rational points of $\sym^d(X)$ from the geometric Bombieri--Lang conjecture \cite[Conjecture~1.1]{XY23a}, which is proved in the case of subvarieties of abelian varieties by \cite{Ray83}, \cite{Bui92} and \cite{Hru96}. For a more explicit proof of this fact, as well as an extension of this result, see \cite[Theorem~1.1]{Gao25}. For convenience, we refer to \cite[Theorem~1.1]{Gao25} in the proof of \Cref{thm:degree_d_points_from_abvar_intro}.

\begin{proof}[Proof of \Cref{thm:degree_d_points_from_abvar_intro} in case $(ii)$]
    We assume that $(ii)$ holds. Alternatively, if $(i)$ holds, the statement can be proven analogously to the proof that follows, except that it requires an application of Faltings' theorems \cite{Fal94} and \cite{Fal83}, instead of \cite[Theorem~1.1]{Gao25} and the main result of \cite{Gra65}, respectively.

    If $d=1$, the main result of \cite{Gra65} implies that the genus of $X$ is 0 or 1. However, $\textup{gon}(X_{\overline{K}})>1$, so~$X$ must be of genus 1. This means that $X_{\overline{K}}$ can be given the structure of an elliptic curve. Hence, we can take $A = Z = X_{\overline{K}}$.

    Now assume $d>1$. By replacing $K$ with a finite extension, we may assume that $X(K)\neq\emptyset$, and that~$X$ has infinitely many points of degree $d$. Write $\textup{Sp} := \textup{Sp}_{\textup{alg}}(\sym^d(X))$ for the \emph{algebraic special locus} of $\sym^d(X)$, i.e., the Zariski closure in $\sym^d(X)$ of the union of all images of non-constant rational maps $A\dashrightarrow \sym^d(X)_{\overline{K}}$ with $A$ an abelian variety over $\overline{K}$. We are going to prove that $\textup{Sp}$ is non-empty.

    It follows from \cite[Theorem~1.1]{Gao25} that there are integers $r,s\geq 0$ such that
    \[
        \overline{\left(\sym^{d}(X)\setminus\textup{Sp}\right)(K)} = Z_1 \cup \ldots \cup Z_r \cup \{p_1,\ldots,p_s\}, \tag{$\ast$}
    \]
    where each $Z_i$ is a positive dimensional subvariety of $\sym^{d}(X)$ with constant normalization and each $p_i$ a closed point. We claim that $r=0$.

    We argue by contradiction. Suppose that $r\ge 1$. Consider the diagram
    \begin{center}
        \begin{tikzcd}
	W & {X_{\overline{K}}\times \textup{Sym}^{d-1}(X_{\overline{K}})} & {X_{\overline{K}}} \\
	{Z_{1,\overline{K}}} & {\textup{Sym}^d(X_{\overline{K}}),} 
	\arrow[from=1-1, to=1-2]
	\arrow["{\textup{deg }d}"', from=1-1, to=2-1]
	\arrow[from=1-2, to=1-3]
	\arrow["{\textup{deg }d}", from=1-2, to=2-2]
	\arrow[from=2-1, to=2-2]
\end{tikzcd}
    \end{center}
    where the square is cartesian. Then $W\rightarrow X_{\overline{K}}$ is surjective. Indeed, if it were not, the image would consist of finitely many points, which would imply that the image of the closed immersion $Z_{1,\overline{K}} \rightarrow \sym^d(X_{\overline{K}})$ consists of finitely many points as well, contradicting the positive dimensionality of $Z_{1,\overline{K}}$.

    Let $Z'_{1,\overline{K}}$ be the normalization of $Z_{1,\overline{K}}$. Define $W'$ to be the pullback of $W$ along the normalization map $Z'_{1,\overline{K}} \rightarrow Z_{1,\overline{K}}$. As $Z'_{1,\overline{K}}$ is a constant variety, $W'$ is a cover of degree $d$ of a constant variety dominating~$X_{\overline{K}}$. By \Cref{lem:dominated_deg_d_cover_variety_implies_curve}, this means that there also exists a cover of degree at most $d$ of a constant curve dominating $X_{\overline{K}}$, which directly contradicts our assumption on $X_{\overline{K}}$. We conclude that $r=0$.

    This shows that ($\ast$) is finite. However, $\sym^{d}(X)$ has infinitely many $K$-points, because the set of points of degree $d$ on $X$ is infinite (see \Cref{sec:symd}). Thus, $\textup{Sp}$ is indeed non-empty. This means that we can find a non-constant rational map $A \dashrightarrow \sym^d(X)_{\overline{K}}$ with $A$ an abelian variety over $\overline{K}$.

    As $\sym^d(X)$ is isomorphic to a subvariety of $\textup{Jac}(X)$, $A$ is smooth over $\overline{K}$ and $\textup{Jac}(X)$ has no rational curves \cite[Proposition~3.9]{Mil08}, Lemmas~3.2~and~3.5 of \cite{JK20} imply that the rational map $A \dashrightarrow \sym^d(X)_{\overline{K}}$ extends to a morphism $A \rightarrow \sym^d(X)_{\overline{K}}$. The image of $A \rightarrow \sym^d(X)_{\overline{K}}$ is isomorphic to an abelian subvariety $A'$ of $\textup{Jac}(X)$. Hence, by replacing $A$ with $A'$, we may assume that $A \rightarrow \sym^d(X)_{\overline{K}}$ is a closed immersion.\newpage

    Consider the diagram
    \begin{center}
        \begin{tikzcd}
	Z & {X_{\overline{K}}\times \textup{Sym}^{d-1}(X_{\overline{K}})} & {X_{\overline{K}}} \\
	A & {\textup{Sym}^d(X_{\overline{K}}),}
	\arrow[from=1-1, to=1-2]
	\arrow["{\textup{deg }d}"', from=1-1, to=2-1]
	\arrow[from=1-2, to=1-3]
	\arrow["{\textup{deg }d}", from=1-2, to=2-2]
	\arrow[from=2-1, to=2-2]
\end{tikzcd}
    \end{center}
    where the square is cartesian. Then $Z\rightarrow X_{\overline{K}}$ must be surjective, as otherwise the image would consist of only finitely many points, contradicting that the image of $A \rightarrow \sym^d(X)_{\overline{K}}$ is positive dimensional. Furthermore, $\dim(A) \le d$ is immediate. Since $d < \textup{gon}(X_{\overline{K}}) \le \dim(\textup{Jac}(X))$ (as $g(X) > 1$), we see that $\sym^d(X_{\overline{K}}) \neq A$, and thus $\dim(A) < d$.

    We may replace $Z$ by a normalization in the above diagram to get that $Z$ is normal. To see that $Z$ is irreducible, note that if $Z$ were not irreducible, we could replace $Z$ by an irreducible component to get a diagram as in the theorem where the degree of the morphism $Z\rightarrow A$ smaller than $d$. Therefore it remains to show that there exists no such diagram.

    Suppose by contradiction such a diagram does exist, where the morphism $Z\rightarrow A$ is of degree $d'<d$. The diagram is defined over some finite extension $K'$ of $K$. Let $A'$ be the abelian variety over $K'$ in this diagram. It follows from the main result of \cite{FJ74} that $A'$ has a potentially dense set of rational points. These rational points can be pulled back to a potentially dense set of points of degree at most $d'$ on $X$, which contradicts $d = \min(\wp(X/K))$.
\end{proof}

\section{Specialization lemmas}\label{sec:specialization_lemmas}

In this section, we prove \Cref{thm:specialization_result_intro}. Throughout, all fiber products $\--\times\--$ without subscript are over the base field.

\begin{lemma}\label{lem:specialization_thm_genus_0}
    Let $S$ be a noetherian integral scheme over an algebraically closed field $k$, let $K$ be the function field of $S$, and let $X\rightarrow S$ be a smooth projective morphism such that the geometric fibers are curves. Fix a positive integer $d$. Suppose that
    \[
        \Sigma^d_0 := \bigl\{s\in S(k) \bigm| X_s \textup{ is a cover of degree }d\textup{ of }\bbP^1_k \bigr\}
    \]
    is dense in $S$. Then the geometric generic fiber $X_{\overline{K}}$ is a cover of degree $d$ of $\bbP^1_{\overline{K}}$.
\end{lemma}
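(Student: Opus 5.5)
We use the relative Hom-scheme $H := \homs^d_S(X,\bbP^1_S)$, which parametrizes $S$-morphisms $X_T\to\bbP^1_T$ whose fibers have degree $d$. Since $X\to S$ is projective and flat and $\bbP^1_S$ is quasi-projective over $S$, the scheme $H$ exists and is quasi-projective over $S$ (\cite[2.1]{Deb01}, \cite[Theorem~3.2]{Gro61}). It is cut out as an open subscheme of the Hilbert scheme of graphs in $X\times_S\bbP^1_S$, on which the degree of the morphism is locally constant. In particular, $h\colon H\to S$ is of finite type.

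First we check that $\Sigma^d_0\subseteq h(H)$. Suppose $s\in\Sigma^d_0$, so $X_s$ is a cover of degree $d$ of $\bbP^1_k$. Since $X_s$ is a smooth projective curve and hence irreducible, the cover is a finite morphism $X_s\to\bbP^1_k$ of degree $d$. This is a $k$-point of the fiber $H_s$, so $s\in h(H)$.

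Next we use Chevalley's theorem: $h(H)$ is constructible in the noetherian scheme $S$. A constructible set containing a dense subset of an irreducible space contains a dense open subset. Hence $h(H)$ contains the generic point $\eta$ of $S$, and so $H_\eta=\homs^d_K(X_K,\bbP^1_K)$ is a nonempty scheme of finite type over $K$.

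Such a scheme has a point over some finite extension $L/K$, which we may take inside $\overline{K}$. This point gives a morphism $X_L\to\bbP^1_L$ of degree $d$. Base changing to $\overline{K}$ yields a morphism $X_{\overline{K}}\to\bbP^1_{\overline{K}}$ of degree $d$; it is non-constant, hence finite and surjective. Since $X_{\overline{K}}$ is smooth, it is reduced, and being a geometric fiber it is irreducible. Therefore this morphism is a cover of degree $d$ in the sense of the conventions.

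The main obstacle is the representability statement. One must make sure that the degree-$d$ condition defines an open and closed, or at least locally closed, subscheme of the Hom-scheme, so that $H$ is of finite type over $S$ and Chevalley's theorem applies. I would handle this by using the Hilbert polynomial of the graph with respect to a relatively ample line bundle: fixing this polynomial selects finitely many components of the Hilbert scheme and pins down the degree $d$.
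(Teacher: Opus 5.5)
Your proposal is correct and follows the paper's proof essentially step for step. You use the same quasi-projective Hom-scheme $\homs^d_S(X,\bbP^1_S)$, the same inclusion of $\Sigma^d_0$ in the image of $H\to S$, and the same $\overline{K}$-point of the nonempty generic fiber $H_\eta$; the only cosmetic difference is that you get $\eta\in h(H)$ from Chevalley's constructibility theorem, whereas the paper uses that the dominant quasi-compact morphism $H\to S$ hits the generic point (Stacks Project, Tag 01RL), and both work.
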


\begin{proof}
    By the theory of Hilbert schemes \cite{Gro61}, the functor $(\textup{Sch}/S)^{\textup{op}} \rightarrow \textup{Sets}$ given by
    \begin{align*}
        T \mapsto \bigl\{ f\colon X_T \rightarrow \mathbb{P}^1_T \bigm| \textup{for all } t\in T, \; f_t \textup{ is finite of degree }d \bigr\}
    \end{align*}
    is representable by a quasi projective scheme $H:= \homs^d_S(X,\mathbb{P}_S^1)$ over $S$, see also \cite[Chapter~2]{Deb01}.

    We claim that $H\rightarrow S$ is dominant. Suppose $s\in \Sigma^d_0$, i.e., there exists a finite morphism $X_s \rightarrow \bbP^1_k$ of degree $d$. Such a morphism corresponds precisely to a $k$-point of the fiber $H_s$. In particular, $H_s$ is non-empty, i.e., $s$ lies in the image of $H\rightarrow S$. We find that $\Sigma^d_0$ is contained in the image of $H\rightarrow S$. The claim now follows from the assumption that $\Sigma^d_0$ is dense in $S$.

    Since $H\to S$ is dominant and quasi-compact (even quasi-projective), the generic point $\eta$ of $S$ is in the image of $H\to S$ \sp{01RL}. In particular, $H_{\eta}$ is non-empty. Then, since $H_{\eta}$ is a non-empty quasi-projective scheme over $K$, it must have a $\overline{K}$-point. Such a point corresponds precisely to a morphism $X_{\overline{K}}\rightarrow \bbP^1_{\overline{K}}$ of degree $d$.
\end{proof}

\begin{lemma}\label{lem:specialization_thm_genus_1}
    Let $S$ be a noetherian integral scheme over an algebraically closed field $k$ with function field $K$, and let $X\rightarrow S$ be a smooth projective morphism such that the geometric fibers are curves. Fix a positive integer $d$. Suppose that
    \[
        \Sigma^d_1 := \bigl\{s\in S(k) \bigm| X_s \textup{ is a cover of degree }d\textup{ of some elliptic curve over }k \bigr\}
    \]
    is dense in $S$. Then the geometric generic fiber $X_{\overline{K}}$ is a cover of degree $d$ of some elliptic curve over $\overline{K}$.
\end{lemma}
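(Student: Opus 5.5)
We follow the strategy of \Cref{lem:specialization_thm_genus_0}, replacing $\mathbb{P}^1_S$ by a universal family of elliptic curves. Over $k$ (algebraically closed, of characteristic not 2 or 3, which we may assume since our application is in characteristic 0; otherwise use a Hesse-type or level-structure family), take $U := \spec k[a,b][\Delta^{-1}]$ with $\Delta = 4a^3+27b^2$, and let $\mathcal{E}\subseteq \mathbb{P}^2_U$ be the Weierstrass family $y^2z = x^3 + axz^2 + bz^3$. Then every elliptic curve over an algebraically closed field $L\supseteq k$ is isomorphic to a fiber $\mathcal{E}_\lambda$ for some $\lambda\in U(L)$. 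Set $S' := S\times U$, and consider the smooth projective curves $X_{S'} := X\times_S S'$ and $\mathcal{E}_{S'} := \mathcal{E}\times_U S'$ over $S'$.

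By Grothendieck's theory of Hilbert schemes \cite{Gro61} (graphs of morphisms in $X_{S'}\times_{S'}\mathcal{E}_{S'}$, which is projective over $S'$), the functor
\[
T\mapsto \bigl\{ f\colon X_T\to \mathcal{E}_T \bigm| \text{for all } t\in T,\ f_t \text{ is finite of degree } d\bigr\}
\]
on $S'$-schemes is representable by a scheme $H := \homs^d_{S'}(X_{S'},\mathcal{E}_{S'})$ that is quasi-projective over $S'$, exactly as cited in the genus 0 case \cite[Chapter~2]{Deb01}. Here degree $d$ can be encoded by fixing the Hilbert polynomial of the graph with respect to a relatively ample bundle on $\mathcal{E}$, for example $\mathcal{O}(3\sigma_0)$. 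Since $U$ is of finite type over $k$, the composite $H\to S'\to S$ is quasi-projective, hence quasi-compact.

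Next we show that $H\to S$ is dominant. For $s\in\Sigma^d_1$ there is a degree $d$ cover $X_s\to E$ with $E$ elliptic over $k$. Because $X_s$ is a smooth connected curve, a cover in the paper's sense with irreducible target is a finite morphism of degree $d$. Choose $\lambda\in U(k)$ with $E\cong\mathcal{E}_\lambda$. Then the composite $X_s\to\mathcal{E}_\lambda$ is a $k$-point of $H$ lying over $(s,\lambda)\in S'$, so $s$ lies in the image of $H\to S$. Density of $\Sigma^d_1$ then gives dominance, and by \sp{01RL} the generic point $\eta$ of $S$ lies in the image. Thus $H_\eta$ is a nonempty scheme of finite type over $K$, so it has a $\overline{K}$-point. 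Such a point consists of a $\lambda\in U(\overline{K})$ together with a finite morphism $X_{\overline{K}}\to\mathcal{E}_\lambda$ of degree $d$, and $\mathcal{E}_\lambda$ is an elliptic curve over $\overline{K}$. This proves the lemma.

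The main obstacle is the representability and quasi-projectivity of $H$ over $S$, rather than merely over $S'$, together with the degree condition. The first is handled by finite type of $U$; the second by the Hilbert polynomial bookkeeping, since a morphism $f$ of degree $d$ gives a graph whose Hilbert polynomial depends only on $d$ and the genus of $X_s$, which is locally constant on $S$. One can also restrict to a component of $S$ where the genus is fixed.
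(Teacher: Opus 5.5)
Your proposal is correct and follows essentially the paper's proof: both parametrize elliptic curves by a family $\mathcal{E}\to U$ (you use the Weierstrass family over $\spec k[a,b][\Delta^{-1}]$, the paper uses the Legendre family over a curve), form the quasi-projective Hom-scheme of degree-$d$ morphisms over $U\times S$, deduce dominance of $H\to S$ from the density of $\Sigma^d_1$, and extract a $\overline{K}$-point over the generic point. Taking a $\overline{K}$-point of $H_\eta$ directly, rather than first choosing $\mu\in U(\overline{K})$ and then a point of the fiber $H_{\eta,\mu}$ as the paper does, is a harmless streamlining.
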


We stress that in \Cref{lem:specialization_thm_genus_1}, for $s\neq s' \in \Sigma^d_1$, the elliptic curves of which $X_s$ and $X_{s'}$ are covers of degree $d$, are allowed to be distinct.

\begin{proof}
    Let $U/k$ be a smooth curve and let $\mathcal{E}\rightarrow U$ be an elliptic curve such that for every elliptic curve~$E$ over $k$, there exists a point $\lambda\in U(k)$ such that $\mathcal{E}_{\lambda} \cong E$. (For example, if $\textup{char}(k)\neq 2$, we can take $U = \mathbb{A}^1_k\setminus\{0,1\}$ and $\mathcal{E}$ the \textit{Legendre elliptic curve} \cite[Proposition~III.1.7(a)]{Sil09}. If $\textup{char}(k)= 2$, we can take $U$ to be any modular curve $Y(p)$ for $p$ any odd prime number.)

    There exists a quasi-projective scheme $H:= \homs^d_{U\times S}(U\times X, \mathcal{E}\times S)$ over $U\times S$ representing the functor $(\textup{Sch}/(U\times S))^{\textup{op}} \rightarrow \textup{Sets}$ given by
    \begin{align*}
        T \mapsto \bigl\{ f\colon (U\times X)_T \rightarrow (\mathcal{E}\times S)_T \bigm| \textup{for all } t\in T, \; f_t \textup{ is finite of degree }d  \bigr\}.
    \end{align*}

    Suppose $s\in \Sigma^d_1$, so that there exists an elliptic curve $E$ over $k$ and a finite morphism $X_s\rightarrow E$ of degree~$d$. Then $E\cong \mathcal{E}_{\lambda}$ for some $\lambda\in U(k)$. Consider the fiber $H_{\lambda,s}$ of $H$ over the point $(\lambda,s)\in (U\times S)(k)$. Then the morphism $X_s\rightarrow E$ corresponds to a $k$-point of $H_{\lambda,s}$. Since $H_{\lambda,s}$ maps to the fiber $H_s$, we find that $H_s$ is non-empty. As this is true for all $s\in\Sigma^d_1$, we conclude that $\Sigma^d_1$ must be contained in the image of $H\rightarrow S$, and hence that this morphism is dominant by our assumption that $\Sigma_1^d$ is dense.

    Since $H\to S$ is dominant and quasi-compact, the generic point $\eta$ of $S$ is in its image \sp{01RL} and thus $H_{\eta}$ is non-empty. Since $H_{\eta}$ is a  non-empty quasi-projective scheme over $U_{K}$, there is a $\overline{K}$-point~$\mu$ of $U_{K}$ which is in the image of the morphism $H_{\eta}\rightarrow U_{K}$, i.e., the fiber $H_{\eta,\mu}$ is non-empty. Then, since $H_{\eta,\mu}$ is a non-empty quasi-projective scheme over $\overline{K}$, it must have a $\overline{K}$-point itself. Such a point corresponds precisely to a morphism $X_{\overline{K}}\rightarrow \mathcal{E}_{\overline{K},\mu}$ of degree $d$.
\end{proof}

For the final specialization lemma, concerning DF curves, we rely on the representability of the following functor by a quasi-projective scheme (see \Cref{lem:DF_functor_representable} below).

\begin{definition}\label{def:DF_functor}
    Let $S$ be a noetherian integral scheme over an algebraically closed field $k$, and $X\rightarrow S$ a smooth projective morphism such that the geometric fibers are curves. Let $U$ be a scheme over $k$, and $\mathcal{E}$ an elliptic curve over $U$. For a positive integer $d$, we define
    \[
        \textup{DF}^d_{S/k,\mathcal{E}/U}(X)\colon (\textup{Sch}/(U\times S))^{\textup{op}} \rightarrow \textup{Sets}
    \]
    to be the functor given by
    \begin{align*}
        T \mapsto
        \Vastl\{  f\colon & X_T\rightarrow (\sym^2_U\mathcal{E})_T \Vastm|
        \vcenter{
            \setbox0 = \hbox{for all $t \in T$ there exists an $m$ dividing $d$ such that $f_t\colon X_t\rightarrow \textup{Im}(f_t)$}
            \copy0
            \hbox to\wd0{\hfill is a finite morphism of degree $m$ and the closed immersion \hfill}
            \hbox to\wd0{\hfill $\im(f_t)\hookrightarrow \sym^2 \mathcal{E}_t$ gives $\im(f_t)$ the structure of a DF curve of type $\frac{d}{m}$\hfill}
        }
        \Vastr\}.
    \end{align*}
\end{definition}

If $k$ and $U$ are clear from the context, we write $\textup{DF}^d_{S,\mathcal{E}}(X)$ instead of $\textup{DF}^d_{S/k,\mathcal{E}/U}(X)$. Moreover, if $S = \spec(R)$ is affine, we write $R$ instead of $\spec(R)$ in the subscript of $\textup{DF}^d_{S/k,\mathcal{E}/U}(X)$.

\begin{lemma} \label{lem:DF_functor_representable}
    Let $S$ be a noetherian integral scheme over an algebraically closed field $k$, and $X\rightarrow S$ a smooth projective morphism such that the geometric fibers are curves. Let $U$ be a scheme over $k$, and $\mathcal{E}$ an elliptic curve over $U$. Then the functor $\textup{DF}^d_{S,\mathcal{E}}(X)$ is representable by a quasi-projective scheme over $U\times S$, which we denote by $\textup{\underline{DF}}^d_{S,\mathcal{E}}(X)$.
\end{lemma}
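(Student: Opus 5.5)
We realize $\textup{DF}^d_{S,\mathcal{E}}(X)$ as a locally closed subscheme of the Hom-scheme
\[
\mathcal{M} := \homs_{U\times S}\bigl(U\times X, (\sym^2_U\mathcal{E})\times S\bigr).
\]
Here $U\times X\to U\times S$ is smooth projective and $(\sym^2_U\mathcal{E})\times S$ is projective over $U\times S$; the latter holds since $\sym^2_U\mathcal{E}$ is a geometric quotient of the projective $\mathcal{E}\times_U\mathcal{E}$ by a finite group, and we may work locally on $U$ if needed. By \cite{Gro61} (see \cite[Chapter~2]{Deb01}), $\mathcal{M}$ is a disjoint union of quasi-projective schemes $\mathcal{M}^P$, indexed by the Hilbert polynomial $P$ of the graph of $f$ with respect to a fixed relatively ample bundle. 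So the task is to show that the DF-condition cuts out a locally closed subscheme meeting only finitely many $\mathcal{M}^P$.

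\textbf{Finiteness.} Fix the relatively ample line bundle $\mathcal{L}=\mathcal{H}^{\otimes 2}\otimes\mathcal{F}$ on $\sym^2_U\mathcal{E}$; it is ample on fibres because $H,F$ span and $(2H+F)\cdot H=3$, $(2H+F)\cdot F=2$, $(2H+F)^2=8$. Suppose $f_t$ is DF of degree $m$ onto a curve of class $(d/m+m')H-m'F$, with $m\mid d$ and $1\le m'\le d/m$. Then
\[
\deg f_t^*\mathcal{L}_t = m\,(2H+F)\cdot\bigl((d/m+m')H-m'F\bigr) = m\bigl(3d/m+m'\bigr),
\]
which takes only finitely many values. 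Together with the fixed genus of the fibres of $X$, this bounds the Hilbert polynomials of the graphs. Hence the functor is a subfunctor of a finite union $\mathcal{M}'$ of the $\mathcal{M}^P$, which is quasi-projective.

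\textbf{Local closedness.} This is the main obstacle. I would proceed in three steps.

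\begin{enumerate}
\item Take the universal morphism $f$ over $\mathcal{M}'$ and form the scheme-theoretic image family. Using the decomposition of the relative Quot/Hilbert scheme of $\sym^2_U\mathcal{E}$ by Hilbert polynomial, stratify $\mathcal{M}'$ into finitely many locally closed pieces. On each piece, $f_*\mathcal{O}$ is flat with fixed Hilbert polynomial, so the image curve $\im(f_t)$ has constant Hilbert polynomial. This uses flattening stratification applied to $f_*\mathcal{O}_{X}$, and the Hilbert polynomial of $f_*\mathcal{O}$ also recovers $m$ via its leading coefficient relative to the image.
\item Show that on each stratum the conditions are constructible and compatible with base change:
\begin{itemize}
\item[(a)] $f_t$ is finite, which is an open condition (quasi-finiteness is open, and properness is automatic);
\item[(b)] the image has numerical class $(d/m+m')H-m'F$, which is determined by the intersection numbers with $\mathcal{H}_t$ and $\mathcal{F}_t$; these are locally constant in flat families because they are read off from Hilbert polynomials of $\mathcal{O}_{\im}\otimes\mathcal{H}^n$ and $\mathcal{O}_{\im}\otimes\mathcal{F}^n$;
\item[(c)] the degree of $X_t\to\im(f_t)$ equals $m$, which is the ratio of $\deg f_t^*\mathcal{L}$ to $\deg\mathcal{L}|_{\im}$ and is again locally constant.
\end{itemize}
Thus, on each flattening stratum, the DF-condition is a union of connected components intersected with an open set.
\item Check that the resulting locally closed subscheme represents the functor, not merely its points. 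This requires that the image formation commutes with base change over the stratum, which is what flatness of $f_*\mathcal{O}$ provides. If reducedness of images causes trouble, one can instead characterize the image as the scheme-theoretic support of $f_*\mathcal{O}$, that is, the Fitting or annihilator ideal, which behaves well in flat families.
\end{enumerate}

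Putting these together, $\textup{\underline{DF}}^d_{S,\mathcal{E}}(X)$ is a locally closed subscheme of the quasi-projective $\mathcal{M}'$, and is therefore quasi-projective over $U\times S$.
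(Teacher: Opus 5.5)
Your overall architecture matches the paper's: the open locus of finite morphisms in the Hom-scheme, the decomposition by Hilbert polynomials of graphs, and finiteness via an intersection computation. The local-closedness part, however, has a genuine gap.

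On the finite locus, $f_*\mathcal{O}_{X_T}$ is automatically $T$-flat, since $f$ is affine and $X_T$ is $T$-flat. So the flattening stratification you invoke is just the decomposition by Hilbert polynomial. That polynomial only records $\deg f_t^*\mathcal{L}_t$, i.e.\ the pushforward cycle $f_{t*}[X_t]=\deg(f_t)\,[\im(f_t)]$. It determines neither the Hilbert polynomial of $\im(f_t)$ nor $\deg(f_t)$: for $d=4$, a birational map onto a DF curve of class $6H-2F$ and a double cover of a DF curve of class $3H-F$ give identical data. So the inference in your step~1, and the local constancy asserted in (b) and (c), fail.

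Moreover, gluing locally closed pieces from different strata only yields a constructible set. More fundamentally, $\textup{DF}^d_{S,\mathcal{E}}(X)$ is defined by a \emph{pointwise} condition on $t\in T$. Suppose such a subfunctor is represented by a locally closed subscheme $Y\subseteq\mathcal{M}'$. Then for each $z$ in the locus, all thickenings $\spec(\mathcal{O}_{\mathcal{M}',z}/\mathfrak{m}_z^n)$ factor through $Y$. Hence the ideal of $Y$ at $z$ lies in $\bigcap_n\mathfrak{m}_z^n=0$, and $Y$ contains an open neighbourhood of $z$; that is, the locus must be open. Subschemes assembled from flattening strata represent a scheme-theoretic flatness condition instead, hence a different functor that agrees with the given one only on reduced $T$. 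So your step~3 does not establish the claim.

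The missing observation is that, with $m$ existentially quantified, the DF condition depends only on $f_{t*}[X_t]$. Write $\deg(f_t)=m$ and $[\im(f_t)]\equiv aH+bF$. The condition is then equivalent to $m(aH+bF)\equiv(d+l)H-lF$ for some $1\le l\le d$. In that case $m\mid d$ and $m\mid l$ hold automatically, and $\im(f_t)$ has class $(\frac{d}{m}+m')H-m'F$ with $m'=\frac{l}{m}$ and $1\le m'\le\frac{d}{m}$.

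Since $H$ and $F$ span the numerical classes with unimodular intersection matrix, this class is determined by $\deg f_t^*\mathcal{H}_t$ and $\deg f_t^*(\mathcal{H}\otimes\mathcal{F})_t$. These are read off from the Hilbert polynomials of the graph $\Gamma_{f_t}$ with respect to $(p_1^*\mathcal{N}\otimes p_2^*\mathcal{H})^{\otimes j}$ and $(p_1^*\mathcal{N}\otimes p_2^*(\mathcal{H}\otimes\mathcal{F}))^{\otimes j}$, for $\mathcal{N}$ relatively ample on $X$ and $j\gg0$. Hence the functor is the intersection of one Hilbert-polynomial piece for the first polarization with finitely many pieces for the second. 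That is an open and closed subscheme of a single quasi-projective piece.

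This is exactly the paper's proof, and it needs no image families or flattening stratifications. Your computation of $\deg f_t^*\mathcal{L}_t$ for the single class $2H+F$ was most of the way there; doing it for two independent classes finishes the argument.

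Two small remarks. First, the paper allows $0\le l\le d$, but $l=0$ gives the class $\frac{d}{m}H$, which is not of DF type, so it should be excluded. Second, positivity against $H$ and $F$ together with $(2H+F)^2>0$ is not an ampleness criterion; cite \cite[Proposition~V.2.21(b)]{Har77} instead.
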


\begin{proof}
    Consider the scheme
    \[
        \homs_{U\times S}(U\times X, \sym^2_U\mathcal{E}\times S)
    \]
    over $U\times S$ representing the Hom-functor $T \mapsto \hom_T((U\times X)_T, (\sym^2_U\mathcal{E}\times S)_T)$ \cite[Theorem~I.1.10]{Kol96}. Then the functor
    \[
        T \mapsto \{ f\in \hom_T((U\times X)_T, (\sym^2_U\mathcal{E}\times S)_T) \;|\; f\textup{ is a finite morphism} \}
    \]
    is an open subfunctor of the Hom-functor by \cite[Proposition~12.93]{GW10}, and hence representable by an open subscheme of $\homs_{U\times S}(U\times X, \sym^2_U\mathcal{E}\times S)$. Let us write $\underline{\textup{Fin}}$ for this scheme (we suppress $X$, $U$, $\mathcal{E}$ and $S$ from the notation for simplicity).

    For $L$ a relatively very ample line bundle on $(U\times X)\times_{U\times S} (\sym^2_U\mathcal{E}\times S) = X \times \sym^2_U\mathcal{E}$, we get a decomposition
    \[
        \underline{\textup{Fin}} = \coprod_{\Phi\in\bbQ[Y]} \underline{\textup{Fin}}^{\Phi,L},
    \]
    where each $\underline{\textup{Fin}}^{\Phi,L}$ is a quasi-projective scheme over $U\times S$ representing the functor
    \begin{align*}
        T \mapsto
        \Bigg\{ f\in \underline{\textup{Fin}} \Biggm|
        \vcenter{
        \setbox0 = \hbox{for all $t\in T$ the Hilbertpolynomial $\Phi_t$ of $\Gamma_{f_t}\subseteq X_t \times \sym^2 \mathcal{E}_t$}
        \copy0
        \hbox to\wd0{\hfill with respect to $L|_{\Gamma_{f_t}}$ is equal to $\Phi$ \hfill}
        }
        \Bigg\},
    \end{align*}
    see \cite[Theorem~3.2]{Gro61}, and \cite[Theorem~6.3,~p.4]{Nit05}.

    Let $\mathcal{M}$ be a line bundle on $X$ which is ample relative to $X\rightarrow S$. Then for each $t\in S$, we get an ample line bundle $\mathcal{M}_t := \mathcal{M}|_{X_t}$. Moreover, $t \mapsto \deg(\mathcal{M}_t)$ is constant. We write $n$ for this degree. Let $p_1\colon X\times \sym^2_U(\mathcal{E}) \rightarrow X$ and $p_2\colon X\times \sym^2_U(\mathcal{E}) \rightarrow \sym^2_U(\mathcal{E})$ denote the projections, and define the following line bundles on $X\times \sym^2_U(\mathcal{E})$:
    \begin{align*}
        \mathcal{L}_{\mathcal{H}}                    & := p_1^* \mathcal{M} \otimes p_2^*\mathcal{H};                                   \\
        \mathcal{L}_{\mathcal{H}\otimes \mathcal{F}} & := p_1^* \mathcal{M} \otimes p_2^* \left(\mathcal{H} \otimes \mathcal{F}\right),
    \end{align*}
    where $\mathcal{H}$ and $\mathcal{F}$ are the line bundles on $ \sym^2_U(\mathcal{E})$ defined in \Cref{sec:DF-curves}.

    For all $\lambda \in U$, the restrictions $\mathcal{H}_{\lambda}$ and $(\mathcal{H\otimes F})_{\lambda}$ are ample by \cite[Proposition~V.2.21(b)]{Har77}. It then follows from the Segre embedding and \cite[Theorem~1.7.8]{Laz04I} that $\mathcal{L}_{\mathcal{H}}$ and $\mathcal{L}_{\mathcal{H}\otimes \mathcal{F}}$ are relatively ample line bundles on $X\times \sym^2_U(\mathcal{E})$ with respect to the map $X\times \sym^2_U(\mathcal{E}) \rightarrow U\times S$.

    Fix $j \gg 0$ such that $\mathcal{L}_{\mathcal{H}}^{\otimes j}$ and $\mathcal{L}_{\mathcal{H}\otimes\mathcal{F}}^{\otimes j}$ are relatively very ample. Let $g$ denote the genus of the fibers $X_s$ of $X \rightarrow S$. Let $T$ be a scheme over $U\times S$ and $f\in \underline{\textup{Fin}}(T)$, i.e., $f$ corresponds to a finite morphism $X_T \rightarrow (\sym^2_U\mathcal{E})_T$. For $i=0,1$, and $t\in T$, the Hilbert polynomial $\Phi_i(Y) \in \mathbb{Q}[Y]$ of the graph $\Gamma_{f_t} \subseteq X_t \times \sym^2\mathcal{E}_t$ with respect to the line bundles $\mathcal{L}_0 := \mathcal{L}_{\mathcal{H}}^{\otimes j}$ and $\mathcal{L}_1 := \mathcal{L}_{\mathcal{H}\otimes\mathcal{F}}^{\otimes j}$ is given by
    \[
        \Phi_i(Y) = \deg(\mathcal{L}_{i,t}|_{\Gamma_{f_t}}) \cdot Y +
        (1 - g).
    \]
    This follows from the Riemann--Roch theorem, see \cite[Remark~1.3.1]{Har77}. Let $aH_t + bF_t$ be the numerical equivalence class of $\im(f_t)$ inside $\sym^2\mathcal{E}_t$ and $\deg(f_t)$ the degree of the finite morphism $f_t\colon X_t\rightarrow \im(f_t)$. Using the projection formula \cite[Proposition~1.10]{Deb01}, we find
    \begin{align*}
        \deg\left((\mathcal{L}_{i,t})|_{\Gamma_{f_t}}\right) & = \Gamma_{f_t} \cdot \mathcal{L}_{i,t}                                                                                                                                \\
                                                             & = \Gamma_{f_t} \cdot p^*_{X_t}(\mathcal{M}_t)^{\otimes j} + \Gamma_{f_t} \cdot p^*_{\sym^2\mathcal{E}_t}(\mathcal{H}_t \otimes \mathcal{F}_t^{\otimes i})^{\otimes j} \\
                                                             & = j\cdot n + j \cdot \deg(f_t) \cdot ((aH_t + bF_t) \cdot (H_t+iF_t))                                                                                                 \\
                                                             & = j\cdot n + j \cdot \deg(f_t) \cdot (a(i+1)+b).
    \end{align*}
    We conclude that there exists an $m>0$ dividing $d$ such that $f_t$ is a finite morphism of degree $m$ and the closed immersion $\im(f_t)\hookrightarrow\sym^2\mathcal{E}_t$ gives $\im(f_t)$ the structure of a DF curve of type $\frac{d}{m}$ if and only if there exists $0\le l \le d$ such that
    \begin{align*}
        \Phi_0(Y) & = j(n+d)Y + (1-g), \textup{ and} \\
        \Phi_1(Y) & = j(n+2d+l)Y + (1-g).
    \end{align*}
    Write $P(Y) =  j(n+d)Y + (1-g)$ and $Q_l(Y) = j(n+2d+l)Y + (1-g)$. It follows that
    \[
        \textup{DF}^d_{S,\mathcal{E}}(X)(T) = \left(\underline{\textup{Fin}}^{P(Y),\mathcal{L}_0} \cap \coprod_{l=0}^d  \underline{\textup{Fin}}^{Q_l(Y),\mathcal{L}_1}\right)(T).
    \]
    Hence, the functor $\textup{DF}^d_{S,\mathcal{E}}(X)$ is represented by a quasi-projective scheme over $U\times S$.
\end{proof}

\begin{lemma}\label{lem:specialization_thm_DF}
    Let $S$ be a noetherian integral scheme over an algebraically closed field $k$, and let $X\rightarrow S$ be a smooth projective morphism such that the geometric fibers are curves. Let $K$ be the function field of $S$. Fix a positive integer $d$. Suppose that
    \begin{align*}
        \Sigma^d_{\textup{DF}} := \Biggl\{ s\in S(k) \Biggm|
        \vcenter{
            \setbox0=\hbox{\textup{There exists an $m$ dividing $d$ such that $X_s$ is}}
            \copy0
            \hbox to\wd0{\hfill \textup{a cover of degree $m$ of a DF curve of type $\frac{d}{m}$}\hfill}
        }
        \Biggr\}
    \end{align*}
    is dense in $S$. Then there is an $m>0$ dividing $d$, such that the geometric generic fiber $X_{\overline{K}}$ admits a morphism of degree $m$ to a DF curve of type $\frac{d}{m}$.
\end{lemma}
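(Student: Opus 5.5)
We follow the same strategy as in the proof of \Cref{lem:specialization_thm_genus_1}, replacing the Hom-scheme to a universal elliptic curve by the scheme $\textup{\underline{DF}}^d_{S,\mathcal{E}}(X)$ of \Cref{lem:DF_functor_representable}.

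First we fix a smooth curve $U/k$ and an elliptic curve $\mathcal{E}\to U$ such that every elliptic curve over $k$ is isomorphic to some fiber $\mathcal{E}_\lambda$ with $\lambda\in U(k)$. We take the Legendre family, or a modular curve $Y(p)$ in characteristic $2$. By \Cref{lem:DF_functor_representable} we obtain a quasi-projective scheme $H:=\textup{\underline{DF}}^d_{S,\mathcal{E}}(X)$ over $U\times S$, and we compose to get a morphism $H\to S$.

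Next we show that $\Sigma^d_{\textup{DF}}$ lies in the image of $H\to S$. Let $s\in\Sigma^d_{\textup{DF}}$, so there is a cover $X_s\to C$ of degree $m$, with $m\mid d$, onto a DF curve $C\hookrightarrow \sym^2(E)$ of type $d/m$. Since $X_s$ is smooth and integral, this is a finite surjective morphism onto an integral curve. Choose $\lambda$ with $E\cong\mathcal{E}_\lambda$. We need the identification $\sym^2(E)\cong \sym^2_U(\mathcal{E})_\lambda$, and we need the numerical classes $H,F$ defined using the origin of $E$ to match $\mathcal{H}_\lambda,\mathcal{F}_\lambda$. Any isomorphism of elliptic curves can be taken to respect the origins, so this holds. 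The composite $X_s\to C\hookrightarrow\sym^2\mathcal{E}_\lambda$ is then a point of $H$ over $(\lambda,s)$, because its image is $C$, it has degree $m$ onto its image, and $C$ is DF of type $d/m$. Hence $H_s\neq\emptyset$. Since $\Sigma^d_{\textup{DF}}$ is dense, $H\to S$ is dominant. As it is quasi-compact, the generic point $\eta$ lies in the image \sp{01RL}.

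Now $H_\eta$ is a nonempty quasi-projective scheme over $U_K$. As in \Cref{lem:specialization_thm_genus_1}, it therefore has a $\overline{K}$-point lying over some $\mu\in U(\overline{K})$. By the functor description, this point is a morphism $f\colon X_{\overline K}\to \sym^2(\mathcal{E}_\mu)$ such that, for some $m\mid d$, $f$ is finite of degree $m$ onto $\im(f)$ and $\im(f)$ is DF of type $d/m$ in $\sym^2(\mathcal{E}_\mu)$. Here we use that the defining condition of the functor is checked pointwise on $t\in T$, with $T=\spec\overline K$. This gives the claim, with $\mathcal{E}_\mu$ an elliptic curve over $\overline K$.

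The main obstacle is the bookkeeping in the second step. The condition defining $\textup{DF}^d_{S,\mathcal{E}}(X)$ in \Cref{def:DF_functor} refers to fibers over points $t$ of $T$. For a $k$-point we must check that the given cover really yields a $k$-point of $H$ over $(\lambda,s)$; this relies on compatibility of $\sym^2_U$ with base change (\Cref{sec:symd}) and on the classes $\mathcal{H}_\lambda,\mathcal{F}_\lambda$ matching $H_\lambda,F_\lambda$ (\Cref{sec:DF-curves}). Once that is settled, the generic-point argument is formal.
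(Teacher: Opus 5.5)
Your proposal is correct and takes essentially the same route as the paper. You place $\Sigma^d_{\textup{DF}}$ inside the image of $\underline{\textup{DF}}^d_{S,\mathcal{E}}(X)\to U\times S\to S$ via $k$-points of the fibers over $(\lambda,s)$, then use dominance and quasi-compactness to reach the generic point and extract a $\overline{K}$-point over some $\mu\in U(\overline{K})$. The only differences are cosmetic: you spell out the compatibility of $\sym^2_U$ and of $\mathcal{H}_\lambda,\mathcal{F}_\lambda$ with base change, which the paper uses implicitly.
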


\begin{proof}
    As in the proof of \Cref{lem:specialization_thm_genus_1}, let $U$ be a smooth curve over $k$ and let $\mathcal{E}\rightarrow U$ be an elliptic curve such that for every elliptic curve $E$ over $k$, there exists a point $\lambda\in U(k)$ such that $\mathcal{E}_{\lambda} \cong E$. By \Cref{lem:DF_functor_representable}, there exists a quasi-projective scheme $\underline{\textup{DF}}^d_{S,\mathcal{E}}(X)$ over $U\times S$ representing the functor $\textup{DF}^d_{S,\mathcal{E}}(X)$. We are going to show that $\Sigma^d_{\textup{DF}}$ is contained in the image of the composition $\varphi\colon\underline{\textup{DF}}^d_{S,\mathcal{E}}(X) \rightarrow U\times S \rightarrow S$.

    Suppose $s\in\Sigma^d_{\textup{DF}}$, i.e., there exists an $m>0$ dividing $d$, and a morphism $f\colon X_s\rightarrow C$ of degree~$m$ with $C$ a DF curve of type $\frac{d}{m}$. Then $C \subseteq \sym^2\mathcal{E}_{\lambda}$ for some $\lambda\in U(k)$ such that the composition $X_s\rightarrow C \hookrightarrow \sym^2\mathcal{E}_{\lambda}$ corresponds to a $k$-point of the scheme $\underline{\textup{DF}}^d_{k,\mathcal{E}_{\lambda}}(X_s)$. In particular, $\underline{\textup{DF}}^d_{k,\mathcal{E}_{\lambda}}(X_s)$ is non-empty.

    Note that $\underline{\textup{DF}}^d_{k,\mathcal{E}_{\lambda}}(X_s)$ is the pullback of $\underline{\textup{DF}}^d_{S,\mathcal{E}}(X)$ along $(\lambda, s) \in (U\times S)(k)$. We see that $\underline{\textup{DF}}^d_{k,\mathcal{E}_{\lambda}}(X_s)$ is mapped to $s$ under the composition
    \[
        \underline{\textup{DF}}^d_{k,\mathcal{E}_{\lambda}}(X_s) \rightarrow\underline{\textup{DF}}_{S,\mathcal{E}}(X) \rightarrow U\times S \rightarrow S.
    \]
    Since $\underline{\textup{DF}}^d_{k,\mathcal{E}_{\lambda}}(X_s)$ is non-empty, we find that $s\in\im(\varphi)$. This shows that indeed $\Sigma^d_{\textup{DF}}\subseteq \im(\varphi)$, and hence that $\varphi$ is dominant by our assumption that $\Sigma^d_{\textup{DF}}$ is dense.

    Since $\varphi$ is dominant and quasi-compact, the generic point $\eta$ of $S$ is in the image of $\varphi$ \sp{01RL} and thus the fiber $\underline{\textup{DF}}^d_{K,\mathcal{E}_K}(X_K)$ of $\varphi$ over $\eta$ is non-empty. In particular, it has a $\overline{K}$-point $x$. Let $\mu$ be the image of~$x$ inside $U(\overline{K})$, so that the fiber of $\underline{\textup{DF}}^d_{K,\mathcal{E}_K}(X_K) \rightarrow U_K$ over $\mu$, which is given by $\underline{\textup{DF}}^d_{\overline{K}, \mathcal{E}_\mu}$, is non-empty. Then $\underline{\textup{DF}}^d_{\overline{K}, \mathcal{E}_\mu}$ also has a $\overline{K}$-point. Such a point corresponds precisely to a finite morphism of degree $m$ from $X_{\overline{K}}$ to a DF curve of type $\frac{d}{m}$ for some $m$ dividing $d$.
\end{proof}

\section{Proof of the main theorem}\label{sec:main_result}
Finally, we prove \Cref{thm:main_result_intro}.

\begin{theorem}\label{thm:main_result}
    Let $K$ be a function field over a field $k$ of characteristic 0, let $X/K$ be a smooth projective curve and set $d = \min(\wp(X/K))$. Assume that one of the following holds.
    \begin{enumerate}
        \item[(i)] $k = \mathbb{Q}$; or,
        \item[(ii)] $k$ is algebraically closed in $K$ and $X_{\overline{K}}$ is not dominated by a cover of degree at most $d$ of a constant curve.
    \end{enumerate}
    Then the following statements hold.
    \begin{enumerate}
        \item If $d \le 3$, then $X_{\overline{K}}$ is a cover of degree $d$ of $\bbP^1$ or of an elliptic curve over $\overline{K}$.
        \item If $d = 4,5$, then either $X_{\overline{K}}$ is a cover of degree $d$ of $\bbP^1$ or of an elliptic curve over $\overline{K}$, or $X_{\overline{K}}$ is (the normalization of) a DF curve of type $d$.
    \end{enumerate}
\end{theorem}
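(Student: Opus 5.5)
We follow the strategy from the introduction and reduce to \Cref{thm:geom_char_deg_d_nf} by specialization. First we dispose of the easy cases. If $\gon(X_{\overline{K}}) \le d$, the minimality of $d$ should force $\gon(X_{\overline{K}}) = d$: a morphism of degree $d' < d$ to $\bbP^1$ would produce a potentially dense set of points of degree $d'$. Then $X_{\overline{K}}$ is a cover of degree $d$ of $\bbP^1$ and we are done. So we assume $\gon(X_{\overline{K}}) > d$. In case $(i)$ this is exactly the hypothesis of \Cref{thm:degree_d_points_from_abvar_intro}$(i)$. In case $(ii)$ we first replace $k$ by its algebraic closure $\overline{k}$ inside $\overline{K}$ and $K$ by $K\overline{k}$. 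This is a finite extension because $K$ is finitely generated, and it does not change $X_{\overline{K}}$ or the potential density degree. After this, \Cref{thm:degree_d_points_from_abvar_intro}$(ii)$ applies. Either way we obtain a diagram $Z \to X_{\overline{K}}$, $Z \to A$ with $Z\to A$ finite of degree $d$, $\dim A < \max(d,2)$, and no such diagram of smaller degree.

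Next we spread everything out. The diagram, together with $X$, is defined over a finitely generated field, so over some integral affine scheme $S_0$ of finite type over $\mathbb{Q}$ (resp.\ over $k$). We choose $S_0$ smooth with $X$ extending to a smooth projective $\mathcal{X}\to S_0$, $A$ extending to an abelian scheme $\mathcal{A}$, $Z$ extending to a flat family with $\mathcal{Z}\to\mathcal{A}$ finite of degree $d$, and $\mathcal{Z}\to\mathcal{X}$ surjective on fibers. Put $S := S_{0,\overline{\mathbb{Q}}}$, or a component of it (resp.\ $S := S_0$); then $S$ is integral over an algebraically closed field, as \Cref{thm:specialization_result_intro} requires. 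For each closed point $s$, the fiber $\mathcal{X}_s$ is defined over a number field $L_s$ in case $(i)$. By Frey--Jarden or Mordell--Weil plus potential density for abelian varieties over number fields, $\mathcal{A}_s$ has potentially dense rational points, and pulling them back through $\mathcal{Z}_s$ shows that $d_s := \min\wp(\mathcal{X}_s/L_s) \le d$. \Cref{thm:geom_char_deg_d_nf} then says that $\mathcal{X}_s$ is a degree-$d_s$ cover of $\bbP^1$ or of an elliptic curve, or, when $d_s\in\{4,5\}$, the normalization of a DF curve of type $d_s$.

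To apply \Cref{thm:specialization_result_intro} we need a dense set of closed points with a fixed type and a fixed degree. There are finitely many pairs (type, $d_s$) with $d_s \le d$, so one class is dense. That class gives a cover of degree $d_s$ of $\bbP^1$, of an elliptic curve, or of a DF curve of type $d_s$ (take $m=1$ in $\Sigma_{\mathrm{DF}}$). Lemmas \ref{lem:specialization_thm_genus_0}, \ref{lem:specialization_thm_genus_1} and \ref{lem:specialization_thm_DF} then transfer this property to $X_{\overline{K}}$ with degree $d_s\le d$. The genus-$0$ case contradicts $\gon > d$ unless $d_s = d$. In the elliptic case, a cover of degree $d_s$ of an elliptic curve gives potentially dense points of degree $d_s$ over $K$, by Frey--Jarden or Néron applied to a non-isotrivial, or suitably chosen, elliptic curve. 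Minimality of $d$ then forces $d_s=d$. In case $(ii)$ we also need the elliptic curve to be non-constant, which follows from the hypothesis on constant curves. In the DF case we get a morphism of degree $m$ onto a DF curve of type $d_s/m$. This curve carries a potentially dense family of divisors of degree $d_s/m$ parametrized by $E$, so pulling back gives points of degree $d_s$, and again $d_s=d$. Moreover, $m>1$ would give, through the DF family, a diagram $Z'\to E$ of degree $d/m<d$, contradicting the last clause of \Cref{thm:degree_d_points_from_abvar_intro}. Hence $m=1$ and $X_{\overline{K}}$ is itself a DF curve of type $d$, which occurs only for $d=4,5$ by the classification.

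The main obstacle is to ensure that the degree obtained on $X_{\overline{K}}$ is exactly $d$ and not smaller, and that the elliptic or DF structures obtained actually produce potentially dense points over $K$. In case $(ii)$ this includes ruling out constant elliptic curves. We will try to handle all of this through the "no smaller diagram" clause of \Cref{thm:degree_d_points_from_abvar_intro}, since any degree-$d'$ cover of an elliptic curve or DF curve yields such a diagram with $A=E$.
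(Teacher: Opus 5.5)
Your architecture matches the paper's: reduce to $\gon(X_{\overline{K}})>d$, take the diagram from \Cref{thm:degree_d_points_from_abvar_intro}, spread out, apply \Cref{thm:geom_char_deg_d_nf} to closed fibres, use pigeonhole, and lift with the specialization lemmas. However, in case $(ii)$ the reduction to number fields is missing. You spread out over a finite-type $k$-scheme and take $S:=S_0$. Its closed points are $k$-points, so the fibres $\mathcal{X}_s$ are curves over the algebraically closed field $k$ (think $k=\mathbb{C}$). Such a fibre need not be defined over any number field, and over $k$ itself every point is rational, so \Cref{thm:geom_char_deg_d_nf} says nothing about it. You only justify ``$\mathcal{X}_s$ is defined over a number field $L_s$'' in case $(i)$, so case $(ii)$ has no argument at this step. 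The missing idea is that, whatever $k$ is, $X$, $Z$, $A$ and the two maps are defined over a finitely generated extension $F\subseteq\overline{K}$ of $\mathbb{Q}$. One therefore spreads out over a smooth affine $\mathbb{Q}$-scheme $S$ with $K(S)=F$ in \emph{both} cases, so that closed points have number-field residue fields. One then applies the specialization lemmas to $\mathcal{X}_{\overline{\mathbb{Q}}}\to S_{\overline{\mathbb{Q}}}$ and base changes the resulting cover from $\overline{F}$ to $\overline{K}$. Hypothesis $(ii)$ enters only through \Cref{thm:degree_d_points_from_abvar_intro}.

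Second, your argument that $m=1$ in the DF case fails. The DF family on $C\subset\sym^2(E)$ gives $W\to E$ of degree $d/m$ with $W\to C$ surjective, but $W$ does not map to $X_{\overline{K}}$. To dominate $X_{\overline{K}}$ you must pass to $W\times_C X_{\overline{K}}$, which has degree $m\cdot\frac{d}{m}=d$ over $E$, so the last clause of \Cref{thm:degree_d_points_from_abvar_intro} is not contradicted. In fact $m>1$ genuinely occurs: a DF curve of type $1$ is an elliptic curve, so $m=d$ just says $X_{\overline{K}}$ is a degree-$d$ cover of an elliptic curve. The paper handles $m>1$ directly. For $m=d$ one gets that elliptic cover. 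For $d=4$, $m=2$, the type-$2$ DF curve $C$ has potentially dense quadratic points, so part $(1)$ makes $C$ a double cover of $\bbP^1$ or of an elliptic curve, and $X_{\overline{K}}$ a degree-$4$ cover of one of these. Both outcomes lie in the allowed conclusion, but your ``hence $m=1$'' does not establish it. Using the no-smaller-diagram clause to force $d_s=d$ is fine.

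Two smaller points. First, $K\overline{k}/K$ is not finite when $k\neq\overline{k}$ (e.g.\ $K=\mathbb{Q}(t)$), so your reduction of ``algebraically closed in $K$'' to ``algebraically closed'' is unjustified. The paper simply invokes \Cref{thm:degree_d_points_from_abvar_intro}$(ii)$, i.e.\ it effectively takes $k$ algebraically closed, as in the introduction. Second, the genus-$0$ outcome is excluded outright by $\gon(X_{\overline{K}})>d$, not ``unless $d_s=d$''.
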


\begin{remark}
    For $d=2$, the result follows directly from \Cref{thm:degree_d_points_from_abvar_intro} together with \cite[Proposition~1]{HS91}.
\end{remark}

\begin{proof}[Proof of \Cref{thm:main_result}]
    We may assume that $\textup{gon}(X_{\overline{K}}) > d$, because if $\textup{gon}(X_{\overline{K}}) = d$ we are done. By \Cref{thm:degree_d_points_from_abvar_intro}, there exists an abelian variety $A$ over $\overline{K}$ such that we have a diagram
    \begin{equation}\label{diag_pf_main_thm_1}
        \begin{tikzcd}
	Z & {X_{\overline{K}}} \\
	A.
	\arrow[two heads, from=1-1, to=1-2]
	\arrow["{\textup{deg }d}"', from=1-1, to=2-1]
\end{tikzcd}
    \end{equation}
    Then there exists an affine $\mathbb{Q}$-scheme $S$ of finite type and smooth over $\mathbb{Q}$, with function field~$K(S)$ inside~$\overline{K}$, over which this diagram is defined. More precisely, there exist models $\mathcal{X}$, $\mathcal{A}$ and $\mathcal{Z}$ over $S$ of $X$, $A$ and $Z$ respectively, such that $\mathcal{X}$ is smooth and projective over $S$, such that $\mathcal{A}$ is an abelian scheme over $S$ and such that we have a diagram
    \begin{equation}\label{diag_pf_main_thm_2}
        \begin{tikzcd}
	{\mathcal{Z}} & {\mathcal{X}} \\
	{\mathcal{A}}
	\arrow[two heads, from=1-1, to=1-2]
	\arrow["{\textup{deg }d}"', from=1-1, to=2-1]
\end{tikzcd}
    \end{equation}
    over $S$, satisfying that the base change along $\spec(\overline{K}) \rightarrow S$ is isomorphic to Diagram \ref{diag_pf_main_thm_1}. We are going to apply the results of \Cref{sec:specialization_lemmas} to $\mathcal{X}_{\ol{\mathbb{Q}}}\rightarrow S_{\ol{\mathbb{Q}}}$.

    For every closed point $s\in S$, the pullback of Diagram \ref{diag_pf_main_thm_2} along $\spec(\kappa(s))\rightarrow S$ shows that~$\mathcal{X}_s$ is dominated by a cover of degree $d$ of an abelian variety. In particular, $\mathcal{X}_s$ has a dense set of points of degree at most $d$, so that $\min(\wp(\mathcal{X}_{{s}}/\kappa(s))) \le d$.

    Note that $\kappa(s)$ is a number field, so we can apply \Cref{thm:geom_char_deg_d_nf}. We see that if $d\le 3$, we have that~$\mathcal{X}_{{s}, \overline{\mathbb{Q}}}$ is a cover of degree at most $d$ of $\mathbb{P}^1$ or of an elliptic curve over $\overline{\mathbb{Q}}$, and if $d=4,5$, we get the additional possibility that $\mathcal{X}_{s, \overline{\mathbb{Q}}}$ is a DF curve of type at most $d$, or the normalization of such a curve.

    First consider the case that $d\le 3$. By the above, there exists $d'\le d$, such that
    \begin{align*}
        \Sigma^{d'}_0 & := \bigl\{\overline{s}\in S_{\ol{\mathbb{Q}}}(\ol{\mathbb{Q}}) \bigm| X_{\overline{s}} \textup{ is a cover of degree }d'\textup{ of }\bbP^1 \bigr\}, \textup{ or}                \\
        \Sigma^{d'}_1 & := \bigl\{\overline{s}\in S_{\ol{\mathbb{Q}}}(\ol{\mathbb{Q}})  \bigm| X_{\ol{s}} \textup{ is a cover of degree }d'\textup{ of some elliptic curve over }\ol{\mathbb{Q}} \bigr\}
    \end{align*}
    is dense in $S_{\ol{\mathbb{Q}}}$. Then \Cref{lem:specialization_thm_genus_0} and \Cref{lem:specialization_thm_genus_1} applied to $\mathcal{X}_{\ol{\bbQ}}\rightarrow S_{\ol{\bbQ}}$ show that $\mathcal{X}_{\overline{K(S)}}$ is a cover of degree $d'$ of either $\mathbb{P}^1$ or of an elliptic curve. Consequently, the base change $\mathcal{X}_{\overline{K}} = X_{\overline{K}}$ is a cover of degree $d'$ of either $\mathbb{P}^1$ or of an elliptic curve as well. This implies $\min(\wp(X/K))\le d'$, so we must have $d' =d$. This proves part (1).

    Now assume that $d=4,5$. By the above, there exists $d'\le d$, such that $\Sigma^{d'}_0$,$\Sigma^{d'}_1$ or
    \begin{align*}
        \Sigma^{d'}_{\textup{DF}} & := \Biggl\{s\in  S_{\ol{\mathbb{Q}}}(\ol{\mathbb{Q}}) \Biggm|
        \vcenter{
            \setbox0=\hbox{there exists an $m$ dividing $d'$ such that $X_s$ is}
            \copy0
            \hbox to\wd0{\hfill a cover of degree $m$ of a DF curve of type $\frac{d'}{m}$\hfill}
        }
        \Biggr\}
    \end{align*}
    is dense in $S$. If either $\Sigma^{d'}_0$ or $\Sigma^{d'}_1$ is dense in $S$, the same arguments as in the $d\le 3$ case show that $X_{\overline{K}}$ is a cover of degree $d$ of $\mathbb{P}^1$ or of an elliptic curve over $\overline{K}$. Suppose that $\Sigma^{d'}_{\textup{DF}}$ is dense in $S$. Then by \Cref{lem:specialization_thm_DF}, there is an $m$ dividing $d'$ such that $\mathcal{X}_{\overline{K(S)}}$ admits a morphism of degree $m$ to a DF curve of type $\frac{d'}{m}$. Then the same is true for the base change $\mathcal{X}_{\overline{K}} = X_{\overline{K}}$. In particular, $X$ has a potentially dense set of points of degree $d'$ coming from the degree $\frac{d'}{m}$ points of this DF curve, so we must have $d' = d$. There are now three possibilities:
    \begin{itemize}
        \item $X_{\overline{K}}$ is a cover of degree $d$ of a DF curve of type 1. Then we are done, because a DF curve of type 1 is an elliptic curve \cite[4.1]{DF93}.
        \item $X_{\overline{K}}$ admits a morphism of degree 1 to a DF curve of type~$d$, but then $X_{\overline{K}}$ is a DF curve of type~$d$ itself, or the normalization of one.
        \item $d = 4$, and $X_{\overline{K}}$ is a cover of degree $2$ of a DF curve $C$ of type 2. Then a model of $C$ over some finite field extension of $K$ has a dense set of points of degree 2. Hence, by part (1) of the theorem, $C$ admits a morphism of degree 2 to either $\mathbb{P}^1$ or to an elliptic curve. Precomposing this morphism with $X_{\overline{K}} \rightarrow C$ shows that $X_{\overline{K}}$ is a cover of degree 4 of either $\mathbb{P}^1$ or of an elliptic curve.
    \end{itemize}
    This concludes the proof.
\end{proof}

\small

\bibliography{references.bib}

\newcommand{\etalchar}[1]{$^{#1}$}
\providecommand{\bysame}{\leavevmode\hbox to3em{\hrulefill}\thinspace}
\providecommand{\MR}{\relax\ifhmode\unskip\space\fi MR }
\providecommand{\MRhref}[2]{%
  \href{http://www.ams.org/mathscinet-getitem?mr=#1}{#2}
}
\providecommand{\href}[2]{#2}
\begin{thebibliography}{BEL{\etalchar{+}}19}

\bibitem[AH91]{AH91}
D.~Abramovich and J.~Harris, \emph{Abelian varieties and curves in {$W_d(C)$}}, Compositio Math. \textbf{78} (1991), no.~2, 227--238. \MR{1104789}

\bibitem[BEL{\etalchar{+}}19]{BELOV19}
A.~Bourdon, \"{O}. Ejder, Y.~Liu, F.~Odumodu, and B.~Viray, \emph{On the level of modular curves that give rise to isolated {$j$}-invariants}, Adv. Math. \textbf{357} (2019), 106824, 33. \MR{4016915}

\bibitem[Bui92]{Bui92}
A.~Buium, \emph{Intersections in jet spaces and a conjecture of {S}. {L}ang}, Ann. of Math. (2) \textbf{136} (1992), no.~3, 557--567. \MR{1189865}

\bibitem[Deb01]{Deb01}
O.~Debarre, \emph{Higher-dimensional algebraic geometry}, Universitext, Springer-Verlag, New York, 2001. \MR{1841091}

\bibitem[DF93]{DF93}
O.~Debarre and R.~Fahlaoui, \emph{Abelian varieties in {$W^r_d(C)$} and points of bounded degree on algebraic curves}, Compositio Math. \textbf{88} (1993), no.~3, 235--249. \MR{1241949}

\bibitem[Fal83]{Fal83}
G.~Faltings, \emph{Endlichkeitss\"{a}tze f\"{u}r abelsche {V}ariet\"{a}ten \"{u}ber {Z}ahlk\"{o}rpern}, Invent. Math. \textbf{73} (1983), no.~3, 349--366. \MR{718935}

\bibitem[Fal94]{Fal94}
\bysame, \emph{The general case of {S}. {L}ang's conjecture}, Barsotti {S}ymposium in {A}lgebraic {G}eometry ({A}bano {T}erme, 1991), Perspect. Math., vol.~15, Academic Press, San Diego, CA, 1994, pp.~175--182. \MR{1307396}

\bibitem[FJ74]{FJ74}
G.~Frey and M.~Jarden, \emph{Approximation theory and the rank of abelian varieties over large algebraic fields}, Proc. London Math. Soc. (3) \textbf{28} (1974), 112--128. \MR{337997}

\bibitem[Gao25]{Gao25}
G.~Gao, \emph{A complete proof of the geometric bombieri-lang conjecture for ramified covers of abelian varieties}, \href{https://arxiv.org/pdf/2511.17010}{arXiv:2511.17010}, 2025.

\bibitem[GW10]{GW10}
U.~G\"{o}rtz and T.~Wedhorn, \emph{Algebraic geometry {I}}, Advanced Lectures in Mathematics, Vieweg + Teubner, Wiesbaden, 2010. \MR{2675155}

\bibitem[Gra65]{Gra65}
H.~Grauert, \emph{Mordells {V}ermutung \"{u}ber rationale {P}unkte auf algebraischen {K}urven und {F}unktionenk\"{o}rper}, Inst. Hautes \'{E}tudes Sci. Publ. Math. (1965), no.~25, 131--149. \MR{222087}

\bibitem[Gro61]{Gro61}
A.~Grothendieck, \emph{Techniques de construction et th\'{e}or\`emes d'existence en g\'{e}om\'{e}trie alg\'{e}brique. {IV}. {L}es sch\'{e}mas de {H}ilbert}, S\'{e}minaire {B}ourbaki, {V}ol. 6, Soc. Math. France, Paris, 1961, pp.~Exp. No. 221, 249--276. \MR{1611822}

\bibitem[HS91]{HS91}
J.~Harris and J.~Silverman, \emph{Bielliptic curves and symmetric products}, Proc. Amer. Math. Soc. \textbf{112} (1991), no.~2, 347--356. \MR{1055774}

\bibitem[Har77]{Har77}
R.~Hartshorne, \emph{Algebraic geometry}, Graduate Texts in Mathematics, No. 52, Springer-Verlag, New York-Heidelberg, 1977. \MR{463157}

\bibitem[Hru96]{Hru96}
E.~Hrushovski, \emph{The {M}ordell-{L}ang conjecture for function fields}, J. Amer. Math. Soc. \textbf{9} (1996), no.~3, 667--690. \MR{1333294}

\bibitem[JK20]{JK20}
A.~Javanpeykar and L.~Kamenova, \emph{Demailly's notion of algebraic hyperbolicity: geometricity, boundedness, moduli of maps}, Math. Z. \textbf{296} (2020), no.~3-4, 1645--1672. \MR{4159843}

\bibitem[KV25]{KV25}
B.~Kadets and I.~Vogt, \emph{Subspace configurations and low degree points on curves}, Adv. Math. \textbf{460} (2025), Paper No. 110021, 36. \MR{4828751}

\bibitem[Kol96]{Kol96}
J.~Koll\'{a}r, \emph{Rational curves on algebraic varieties}, Ergebnisse der Mathematik und ihrer Grenzgebiete. 3. Folge. A Series of Modern Surveys in Mathematics, vol.~32, Springer-Verlag, Berlin, 1996. \MR{1440180}

\bibitem[Laz04]{Laz04I}
R.~Lazarsfeld, \emph{Positivity in algebraic geometry. {I}}, Ergebnisse der Mathematik und ihrer Grenzgebiete. 3. Folge. A Series of Modern Surveys in Mathematics, vol.~48, Springer-Verlag, Berlin, 2004. \MR{2095471}

\bibitem[Man63]{Man63}
J.~I. Manin, \emph{Proof of an analogue of {M}ordell's conjecture for algebraic curves over function fields}, Dokl. Akad. Nauk SSSR \textbf{152} (1963), 1061--1063. \MR{154868}

\bibitem[Mil86]{Mil86}
J.~S. Milne, \emph{Jacobian varieties}, Arithmetic geometry ({S}torrs, {C}onn., 1984), Springer, New York, 1986, pp.~167--212. \MR{861976}

\bibitem[Mil08]{Mil08}
\bysame, \emph{Abelian varieties (v2.00)}, 2008, Available at \url{www.jmilne.org/math/}, pp.~166+vi.

\bibitem[Mum70]{Mum70}
D.~Mumford, \emph{Abelian varieties}, Tata Institute of Fundamental Research Studies in Mathematics, vol.~5, Published for the Tata Institute of Fundamental Research, Bombay; by Oxford University Press, London, 1970. \MR{282985}

\bibitem[Nit05]{Nit05}
N.~Nitsure, \emph{Construction of {H}ilbert and {Q}uot schemes}, Fundamental algebraic geometry, Math. Surveys Monogr., vol. 123, Amer. Math. Soc., Providence, RI, 2005, pp.~105--137. \MR{2223407}

\bibitem[Poo17]{Poo17}
B.~Poonen, \emph{Rational points on varieties}, Graduate Studies in Mathematics, vol. 186, American Mathematical Society, Providence, RI, 2017. \MR{3729254}

\bibitem[Ray83]{Ray83}
M.~Raynaud, \emph{Around the {M}ordell conjecture for function fields and a conjecture of {S}erge {L}ang}, Algebraic geometry ({T}okyo/{K}yoto, 1982), Lecture Notes in Math., vol. 1016, Springer, Berlin, 1983, pp.~1--19. \MR{726419}

\bibitem[Sil09]{Sil09}
J.~H. Silverman, \emph{The arithmetic of elliptic curves}, second ed., Graduate Texts in Mathematics, vol. 106, Springer, Dordrecht, 2009. \MR{2514094}

\bibitem[Sp18]{Sp18}
{The Stacks Project Authors}, \emph{\textit{Stacks Project}}, \url{https://stacks.math.columbia.edu}, 2025.

\bibitem[VV24]{VV24}
B.~Viray and I.~Vogt, \emph{Isolated and parameterized points on curves}, \href{https://arxiv.org/pdf/2406.14353}{arXiv:2406.14353}, 2024.

\bibitem[XY23a]{XY23b}
J.~Xie and X.~Yuan, \emph{The geometric bombieri-lang conjecture for ramified covers of abelian varieties}, \href{https://arxiv.org/pdf/2308.08117}{arXiv:2308.08117}, 2023.

\bibitem[XY23b]{XY23a}
\bysame, \emph{Partial heights, entire curves, and the geometric bombieri-lang conjecture}, \href{https://arxiv.org/pdf/2305.14789}{arXiv:2305.14789}, 2023.

\end{thebibliography}

\bibliographystyle{amsalpha}

\end{document}